\newtheorem{theorem}{Theorem}
\newtheorem{lemma}[theorem]{Lemma}
\newtheorem{corollary}[theorem]{Corollary}
\newtheorem{example}[theorem]{Example}
\newcommand{\R}{\mathbb R}
\newcommand{\C}{\mathbb C}
\newcommand{\F}{\mathbb{F}_p}
\newcommand{\FF}{\mathbb{F}}
\newcommand{\GG}{\mathbb{G}}
\newcommand{\I}{\mathcal{I}}
\renewcommand{\P}{\mathcal{P}}
\renewcommand{\L}{\mathcal{L}}
\newcommand{\eps}{\epsilon}
\newcommand{\less}{\ll}
\newcommand{\more}{\gg}
\title[An improved point-line incidence bound]{An improved point-line incidence bound over arbitrary fields}
\author{Sophie Stevens}
\address{Sophie Stevens, Department of Mathematics, University of Bristol, Bristol BS8 1TW, United Kingdom}
\email{ss1252@bristol.ac.uk}
\author{Frank de Zeeuw}
\address{Frank de Zeeuw, Department of Mathematics, \'Ecole Polytechnique F\'ed\'erale de Lausanne, Lausanne CH-1015, Switzerland}
\email{fdezeeuw@gmail.com}
\begin{document}


\begin{abstract}
We prove a new upper bound for the number of incidences between points and lines in a plane over an arbitrary field $\FF$, a problem first considered by Bourgain, Katz and Tao. 
Specifically, we show that $m$ points and $n$ lines in $\FF^2$, with $m^{7/8}<n<m^{8/7}$, determine at most $O(m^{11/15}n^{11/15})$ incidences (where, if $\FF$ has positive characteristic $p$, we assume $m^{-2}n^{13}\less p^{15}$).
This improves on the previous best known bound, due to Jones.

To obtain our bound, we first prove an optimal point-line incidence bound on Cartesian products, using a reduction to a point-plane incidence bound of Rudnev.
We then cover most of the point set with Cartesian products, and we bound the incidences on each product separately, using the bound just mentioned.

We give several applications,
to sum-product-type problems, 
an expander problem of Bourgain, 
the distinct distance problem 
and Beck's theorem.
\end{abstract}

\maketitle

\section{Introduction}\label{sec:intro}

\subsection{Notation}

We will use the following notation throughout the paper.
We let $\FF $ be an arbitrary field,
and we let $\F$ be the finite field with $p$ elements for a prime $p$.
We let $\P$ be a set of $m$ points in $\FF^2$ and $\L$ a set of $n$ lines in $\FF^2$.
We define
$$\I(\P,\L):=\left | \{(q,\ell) \in \P\times \L: q\in \ell \}\right |$$
to be the number of \emph{incidences} between $\P$ and $\L$.
We use standard asymptotic notation: $x\less y$ and $x=O(y)$ denote the existence of a constant $c>0$ such that $x\leq cy$; $x\more y$ and $x=\Omega(y)$ denote the existence of a $c>0$ such that $x\geq cy$. 
If $x\less y$ and $x\more y$, we write $x\approx y$ or $x=\Theta(y)$.

\subsection{Background}

Szemer\'edi and Trotter \cite{ST} proved the sharp upper bound 
\begin{equation}\label{eq:szemtrot}
\I(\P,\L) \less m^{2/3}n^{2/3}+m+n
\end{equation}
in the special case $\FF=\R$. 
The Szemer{\'e}di-Trotter theorem has been applied to numerous problems (see e.g. \cite{Dvir, TV}).
One famous application is the sum-product bound of Elekes \cite{Elekes}, who deduced from the Szemer\'edi-Trotter theorem that for a finite set $A\subset \R$ we have 
\begin{equation}\label{eq:elekes}
\max\{|A+A|, |A\cdot A|\} \more|A|^{5/4}.
\end{equation}
This bound has been improved using other methods (the current best bound \cite{KS, RSS} has the exponent $4/3+1/1509$, up to logarithmic factors), but Elekes's introduction of incidence geometry into additive combinatorics has proved particularly fruitful.

Various proofs of the Szemer\'edi-Trotter bound in $\R^2$ are known, but all use special properties of $\R$ that make the proofs hard to extend to other fields. 
T{\'o}th \cite{Toth} and Zahl \cite{Zahl} were successful in extending the Szemer\'edi-Trotter theorem to $\C^2$.
Proving a sharp incidence bound between points and lines over other fields, in particular finite fields, 
remains a challenging open problem.
Part of the reason for this may be that the bound \eqref{eq:szemtrot} can fail for large sets when the field $\FF$ has a finite subfield $\GG$;
in particular, if we take $\P = \GG^2$ and let $\L$ be the set of all lines in $\GG^2$, then $\I(\P,\L) \approx |\GG|^3$ but $m^{2/3}n^{2/3} \approx |\GG|^{8/3}$.

Over finite fields, the extremal situations of `very small' or `very large' sets are relatively well understood
(where size is relative to the cardinality of the field). 
For very small sets, Grosu \cite{Grosu} 
achieved the optimal bound $O(N^{4/3})$ over $\F$,
if $m,n\leq N$ and $5N <\log_2 \log_6 \log_{18} p$. 
For very large sets, Vinh \cite{Vinh} proved $\I(\P,\L)\leq mn/q+q^{1/2}\sqrt{mn}$ over any finite field $\mathbb{F}_q$.
When $m=n=N\approx q^{3/2}$, 
this bound also meets the Szemer\'edi-Trotter bound $O(N^{4/3})$.
However, for `medium-size' sets in between these extremes, these results (and their proofs) have little to say.

Over any field $\FF$, 
a simple argument (see e.g. \cite[Corollary 5.2]{BKT} or \cite[Claim 2.2]{Dvir}) using the Cauchy-Schwarz inequality gives the following combinatorial bound. 

\begin{lemma}\label{lem:combinatorial}
Let $\P$ be a set of $m$ points in $\FF^2$ and $\L$ a set of $n$ lines in $\FF^2$. 
Then
\[\I(\P,\L)\leq \min\left\{m^{1/2}n+m, mn^{1/2}+n\right\}.\]
\end{lemma}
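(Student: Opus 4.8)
The plan is a standard double-counting argument via Cauchy--Schwarz. By symmetry between points and lines (a line in $\FF^2$ is determined by two points, and two distinct points lie on a unique line), it suffices to prove one of the two bounds, say $\I(\P,\L)\leq m^{1/2}n+m$; the other follows by the dual argument, exchanging the roles of $\P$ and $\L$ and using that two lines meet in at most one point.

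First I would introduce, for each point $q\in\P$, the degree $d_q := |\{\ell\in\L : q\in\ell\}|$, so that $\I(\P,\L)=\sum_{q\in\P} d_q$. Next I would count ordered pairs of distinct lines through a common point of $\P$:
\[
\sum_{q\in\P} d_q(d_q-1) = |\{(q,\ell_1,\ell_2) : q\in\P,\ \ell_1,\ell_2\in\L,\ \ell_1\neq\ell_2,\ q\in\ell_1\cap\ell_2\}| \leq n(n-1),
\]
since any two distinct lines determine at most one point, so each ordered pair $(\ell_1,\ell_2)$ contributes at most once. Hence $\sum_{q} d_q^2 \leq n(n-1) + \sum_q d_q = n^2 - n + \I(\P,\L)$.

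Then I would apply the Cauchy--Schwarz inequality in the form $\left(\sum_{q\in\P} d_q\right)^2 \leq m \sum_{q\in\P} d_q^2$, which gives
\[
\I(\P,\L)^2 \leq m\left(n^2 - n + \I(\P,\L)\right) \leq m n^2 + m\,\I(\P,\L).
\]
Setting $X := \I(\P,\L)$, this is the quadratic inequality $X^2 - mX - mn^2 \leq 0$, so $X \leq \tfrac{1}{2}\left(m + \sqrt{m^2 + 4mn^2}\right) \leq m + \sqrt{m}\,n$, using $\sqrt{a+b}\leq\sqrt a+\sqrt b$. This yields $\I(\P,\L)\leq m^{1/2}n + m$, and the symmetric computation gives $\I(\P,\L)\leq mn^{1/2}+n$; taking the minimum completes the proof.

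There is no real obstacle here — the only point requiring a little care is making sure the pairing bound uses the correct incidence axiom in each of the two cases (two points determine a line versus two lines determine a point), and handling the elementary quadratic inequality cleanly so that the additive $+m$ (resp.\ $+n$) term appears with constant $1$.
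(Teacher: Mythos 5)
Your proof is correct and is exactly the standard Cauchy--Schwarz double-counting argument that the paper itself does not spell out but cites (Corollary 5.2 of \cite{BKT} and Claim 2.2 of \cite{Dvir}); the degree-counting, the bound $\sum_q d_q(d_q-1)\leq n(n-1)$ from the axiom that two distinct lines share at most one point, and the resolution of the resulting quadratic inequality are all as in those references. The dual bound is likewise handled correctly, so there is nothing to add.
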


Bourgain, Katz and Tao \cite{BKT} were the first to establish a non-trivial incidence bound in $\FF_p^2$.
They proved $\I(\P,\L)\less N^{3/2-\eps}$ for $m , n\leq  N= p^\alpha$, with $0<\alpha<2$ and $\eps=\eps(\alpha)>0$. 
They achieved this by first proving a more general but weaker version of Elekes's sum-product bound \eqref{eq:elekes}, and then, roughly speaking, running Elekes's argument in reverse.
An explicit value $\eps =1/10678$ for $\alpha=1$ was found by Helfgott and Rudnev \cite{HR},
and further improvements to $\eps$ appeared in the work of Jones \cite{Jones16,Jones12}, with the best bound summarised below.

 \begin{theorem}\label{thm:tj}\emph{(Jones \cite{Jones12})}
Let $\P$ be a set of $m$ points in $\F^2$ and $\L$ a set of $n$ lines in $\F^2$, with $m,n \leq N <p$. 
Then with $\eps= 1/662$ we have
\[ \I(\P,\L)\less N^{3/2-\eps}.\]
\end{theorem}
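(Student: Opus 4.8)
The plan is to run Elekes's sum-product argument in reverse --- in the quantitative form pioneered by Bourgain--Katz--Tao and made explicit by Helfgott--Rudnev --- while feeding in the strongest available explicit sum-product estimate over $\F$. The very unbalanced cases need no new idea: if $\min\{m,n\}\less N^{1-2\eps}$ then Lemma~\ref{lem:combinatorial} already gives $\I(\P,\L)\less m^{1/2}n+m\less N^{3/2-\eps}$ (and symmetrically when $n$ is small), so I would assume throughout that $N^{1-2\eps}<m,n\leq N$, hence $m\approx n\approx N$ up to factors $N^{O(\eps)}$. I would then argue by contradiction, supposing $\I(\P,\L)\more N^{3/2-\eps}$ with a large implied constant.

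The first step is to \emph{regularise}: by dyadic pigeonholing, pass to a subconfiguration of $\P$ and $\L$ in which each surviving point lies on $\approx s$ surviving lines and each surviving line carries $\approx t$ surviving points, where $ms\approx nt\approx\I$, so that $s\approx t\approx N^{1/2-\eps}$ up to $N^{O(\eps)}$ and logarithmic losses. The second step is to \emph{extract a Cartesian grid}: fix a rich line $\ell_0$, note that a positive proportion of its $\gtrsim t$ points are themselves rich, and choose several of them; a projective transformation sending two of these points to the points at infinity in the coordinate directions turns their pencils into the axis-parallel lines of a grid $X\times Y$ with $|X|,|Y|\gtrsim s$. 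Pigeonholing inside this grid, and exploiting the pencils through the remaining rich points of $\ell_0$ (which become families of lines of bounded slope meeting the grid in many points), one obtains a set $A\subseteq\FF$ with $|A|\approx N^{1/2-\eps}$ for which a sum-product-type quantity is abnormally small --- say $|A+A|$ and $|A\cdot A|$ are both $\less N^{O(\eps)}|A|$, equivalently $A$ has large additive and multiplicative energy.

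For the final step, since $|A|\leq N<p$, I would apply a quantitative sum-product theorem over $\F$ in the form $\max\{|A+A|,|A\cdot A|\}\more|A|^{1+\delta}$ for an explicit $\delta>0$. Taking $\eps$ small compared with $\delta$ makes this incompatible with the smallness from the previous step, and the precise constant $\eps=1/662$ would come out of balancing the exponent losses of the regularisation and projective steps against the best explicit sum-product exponent then available --- and, in Jones's refinement, of carrying out the reduction more economically than in Helfgott--Rudnev.

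I expect the main obstacle to be precisely this bookkeeping: each pigeonholing or projective step costs a power of $N^{\eps}$ (or a logarithm), and one must verify that the accumulated loss never swamps the sum-product gain $\delta$, while also checking that the extracted set $A$ has size a fixed power of $N$ --- so that the sum-product input, which requires $|A|$ to be not too large relative to $p$ (guaranteed here by $N<p$), genuinely applies. Extracting the specific value $662$ is then a matter of optimising these trade-offs rather than of proving any single new inequality.
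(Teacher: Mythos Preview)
The paper does not prove this statement. Theorem~\ref{thm:tj} is quoted as a prior result due to Jones~\cite{Jones12}, and the paper only comments on it: that Jones's proof relies on a sum-product-type energy inequality (his Lemma~11), and that replacing that lemma by a multiplicative analogue of \cite[Theorem~6]{RRS} would improve the exponent to $\eps\geq 1/326$. There is therefore no proof in the paper to compare your attempt against.

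That said, your sketch is a reasonable outline of the Bourgain--Katz--Tao/Helfgott--Rudnev methodology, but it is not quite what the paper attributes to Jones. According to the paper's description (see the remarks following Theorem~\ref{thm:tj} and the opening of Section~\ref{sec:grids}), Jones's refinement does not extract a \emph{single} grid and derive a contradiction from a sum-product bound; rather, he regularises and then \emph{partitions} the point set into many subsets, each covered by two small pencils, and feeds in a sum-product-type \emph{energy} inequality. Your sketch, by contrast, extracts one grid and invokes a sum-set/product-set lower bound, which is closer to the earlier Helfgott--Rudnev argument. The distinction matters for the arithmetic that produces the specific constant $1/662$: the partition-and-energy route is what the paper says yields Jones's exponent, and the single-grid route you describe would not obviously land on that number without the additional structure Jones exploits.
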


Jones originally stated his result only over $\F$, as his proof relied on a sum-product-type energy inequality in $\F$. 
It is a short calculation to show that his bound improves to $\eps\geq 1/326$ using recent bounds. 
Indeed, this follows by replacing \cite[Lemma 11]{Jones12} in Jones's proof by a suitable multiplicative analogue of \cite[Theorem 6]{RRS}.
Moreover, the application of \cite{RRS} swiftly extends his result to any field. 

\subsection{Main results}
Our main results are two new point-line incidence bounds over arbitrary fields. 
The first improves on Theorem \ref{thm:tj}.

\begin{theorem}
\label{thm:genszemtrot}
Let $\P$ be a set of $m$ points in $\FF^2$ and $\L$ a set of $n$ lines in $\FF^2$,
with $m^{7/8} < n < m^{8/7}$.
If $\FF$ has positive characteristic $p$,
assume $m^{-2}n^{13}\less p^{15}$.
Then 
$$\I(\P,\L)\less m^{11/15}n^{11/15}.$$
\end{theorem}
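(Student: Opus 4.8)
The strategy, as the abstract signals, proceeds in two stages. The first stage is to establish an optimal point-line incidence bound on Cartesian products $A \times B \subset \FF^2$, and the second is to reduce the general case to this special case by covering most of $\P$ by Cartesian products. For the first stage, the plan is to pass from a point-line incidence problem on $A \times B$ to a point-plane incidence problem in three dimensions, where one can invoke Rudnev's point-plane bound. The point is that a line $\ell: y = ax + b$ incident to a point $(s,t) \in A \times B$ encodes the equation $t = as + b$, i.e.\ the point $(a, b)$ lies on the plane $\{(x,y,z) : sx + y = z\}$ (or some such parametrization); distinct lines through points of $A \times B$ thus become points in $\FF^3$, and the points of $A \times B$ become planes, with incidences preserved. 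Rudnev's bound then yields, for $|A| = n_1$, $|B| = n_2$ and $n$ lines, an incidence count of the shape $O\bigl((n_1 n_2 n)^{1/2} + n_1 n_2 + n\bigr)$ (with a characteristic-$p$ restriction), which one checks is sharp. I would expect the bookkeeping here — ensuring the point-plane configuration has no plane containing too many collinear points, so that Rudnev's hypothesis is met — to be the technical heart of this stage.

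For the second stage, the plan is a covering argument. One wants to partition (most of) $\P$ into a controlled number of subsets, each contained in a Cartesian product of the form $A_i \times B_i$, with the products not too unbalanced, and then apply the Cartesian-product bound on each piece and sum. The natural way to generate Cartesian structure is to look at the $x$-coordinates: group the points by which $x$-coordinate they have, collect $x$-fibres of comparable size into blocks, and within a block the points live in (a set of columns) $\times$ ($y$-values), which after a further dyadic decomposition on the $y$-side gives genuine Cartesian products. Points lying in very sparse fibres (few points per column, or few columns) can be handled directly by Lemma \ref{lem:combinatorial}, since for such leftover pieces the trivial bound is already strong enough. Summing $\sum_i (|A_i||B_i| |\L|)^{1/2}$ over a balanced dyadic family, using $\sum |A_i| |B_i| \le m$ and Cauchy--Schwarz over the $O(\log m)$ scales, should produce a bound of the form $m^{\alpha} n^{\beta}$; optimizing the split between "handled by the product bound" and "handled trivially" is what pins down the exponents, and it is here that the constraint $m^{7/8} < n < m^{8/7}$ and the final exponent $11/15$ emerge.

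The main obstacle I anticipate is the covering step: it is not a priori clear that a general point set admits an efficient cover by balanced Cartesian products, and indeed in general it does not — one can only cover the part of $\P$ supported on "rich" rows and columns, and one must argue that the residual incidences (from points in poor rows or poor columns, and from lines that are nearly vertical, which the parametrization $y = ax+b$ misses and which must be treated separately) are genuinely lower-order. Getting the two error terms — the combinatorial contribution from sparse fibres and the $n_1 n_2 + n$ additive terms in Rudnev's bound, summed over all dyadic blocks — to stay below $m^{11/15} n^{11/15}$ in the stated range is the delicate part, and it is presumably why the theorem is stated only for $n$ in a window around $m$ and with the characteristic condition $m^{-2} n^{13} \less p^{15}$ (which should be exactly what one needs for Rudnev's bound to be non-trivial on each of the products appearing in the cover). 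The logarithmic losses from dyadic pigeonholing must also be absorbed, presumably by a slightly wasteful choice of parameters that the $11/15 < 2/3 + \text{something}$ slack permits.
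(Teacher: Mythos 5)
Your overall architecture (Cartesian-product incidence bound via Rudnev, then a covering argument) matches the paper's, but both stages as you describe them contain steps that fail. The more serious one is the covering step. You propose to extract Cartesian products from $\P$ by grouping points according to their $x$-coordinates and then dyadically decomposing the $y$-values. This cannot work for an unstructured point set: take $m$ points in general position, so that every vertical fibre contains exactly one point. Then every fibre is ``sparse'' and your entire point set falls into the leftover case, where you propose to use Lemma \ref{lem:combinatorial}; but in the range $m^{7/8}<n<m^{8/7}$ that lemma only gives (for $m=n$) the bound $m^{3/2}$, far weaker than the claimed $m^{22/15}$. The point is that the relevant grid structure is not visible in the ambient coordinates at all --- it has to be manufactured from the incidence structure itself. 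The paper's mechanism (going back to Bourgain--Katz--Tao and Jones) is: after discarding points with atypically few or many incidences (the latter handled by an induction on $m$), every remaining point lies on $\approx K:=\I(\P,\L)/m$ lines of $\L$; a double counting argument then produces two points $p_1,q_1$ such that a subset of $\P$ of size $\gtrsim K^4m/n^2$ lies in the intersection of the pencil of $O(K)$ lines of $\L$ through $p_1$ with the pencil of $O(K)$ lines of $\L$ through $q_1$. A projective transformation sending $p_1,q_1$ to infinity turns these two pencils into horizontal and vertical lines, so the subset sits inside an $O(K)\times O(K)$ Cartesian product. Iterating extracts $O(n^2/K^4)$ such grids covering all but an inductively negligible part of $\P$. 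Without this pencil-plus-projective-transformation idea (or an equivalent), the reduction to the Cartesian case does not go through.

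The first stage also has a gap. The bound you posit for a product $A\times B$ with $|A|=n_1$, $|B|=n_2$ and $n$ lines, namely $O((n_1n_2n)^{1/2}+n_1n_2+n)$, is false: it would beat the Szemer\'edi--Trotter bound over $\R$, and Elekes's construction (Example \ref{ex:elekes}, with $n_1=a$, $n_2=2ac$, $n=ac^2$ and $a^2c^2$ incidences) violates it. The correct bound, Theorem \ref{thm:cartszemtrot}, is $O(|A|^{3/4}|B|^{1/2}n^{3/4}+n)$ for $|A|\leq|B|$, and its exact shape is what produces the exponent $11/15$ after summing over the $O(n^2/K^4)$ grids; with your posited form the exponents would come out differently. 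Relatedly, your sketch of the reduction to Rudnev's theorem is not a genuine three-dimensional point--plane problem: interpreting the incidence $t=as+b$ directly gives points and ``planes'' parametrized by two coordinates each. The missing step is a Cauchy--Schwarz in the $B$-variable, which replaces the incidence count by the energy $|\{(x,s,t,x',s',t'): xs+t=x's'+t'\}|$ with $(x,s,t)\in A\times\L^*$; this six-variable bilinear equation splits as points $(x,s',t')$ versus planes indexed by $(x',s,t)$ in $\FF^3$, to which Theorem \ref{thm:rudnev} applies (with the maximal collinearity controlled by first pruning large pencils of concurrent or parallel lines from $\L$, which is where the $+n$ term arises). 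The asymmetry between $|A|^{3/4}$ and $|B|^{1/2}$ in the resulting bound comes precisely from applying Cauchy--Schwarz in $B$ but not in $A$.
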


When $m = n=N$, 
Theorem \ref{thm:genszemtrot} improves the $\eps$ in Theorem~\ref{thm:tj} from $1/662$ to $1/30$, 
 it extends the condition in positive characteristic to $N\less p^{15/11}$, 
 and it has the further advantage of being sensitive to the relative sizes of the point set and line set.
To compare it with the bound of Vinh \cite{Vinh}, assume $\FF = \F$ and $m=n=N$; 
then Theorem \ref{thm:genszemtrot} is better for $N\less p^{15/14}$.
We remark that  Theorem \ref{thm:genszemtrot}, and most results in this paper, are meaningful only if the characteristic $p$ is large, since if $p=O(1)$ then $m,n =O(1)$. 
We could state the bound in the Szemer\'edi-Trotter-like form $\I(\P,\L)\less m^{11/15}n^{11/15} +m+n$, 
but this might be misleading, since outside the range $m^{7/8} < n < m^{8/7}$ the bounds from Lemma \ref{lem:combinatorial} are better.
We summarise the situation in Table \ref{table:bounds}.

\setlength{\tabcolsep}{10pt}
\renewcommand{\arraystretch}{1.3}
\begin{table}[ht]
\begin{center}
\begin{tabular}{ |l|l| } 
 \hline
 \multicolumn{1}{|c|}{Range of $n$} & \multicolumn{1}{|c|}{Best bound}  \\ 
 \hline\hline
 \hspace{35pt}$n < m^{1/2}$ & $\less m$ \\ 
 \hline
 $m^{1/2}< n < m^{7/8}$ & $\less m^{1/2} n$  \\ 
 \hline
 $m^{7/8} < n < m^{8/7}$ & $\less m^{11/15}n^{11/15}$  \\ 
 \hline
 $m^{8/7}<n < m^2$ & $\less m n^{1/2}$  \\ 
 \hline
 \hspace{8pt}$m^2 < n  $ & $\less n$  \\ 
 \hline
\end{tabular}
\vspace{10pt}
\caption{Overview of best known upper bounds on $\I(\P,\L)$}
\label{table:bounds}
\end{center}
\end{table}
\vspace{-15pt}

Bourgain, Katz and Tao \cite{BKT} first made the observation that if there are many incidences, a large grid-like structure must exist in the point set,
and this idea was later refined by Jones \cite{Jones12}.
Our proof of Theorem \ref{thm:genszemtrot} is based on the same idea: We cover the point set by grid-like structures that are projectively equivalent to Cartesian products,
and then we bound the incidences on each Cartesian product using the following new incidence bound.

\begin{theorem}
\label{thm:cartszemtrot}
Let $A, B\subset \FF$ be sets with $|A|=a$, $|B|=b$, $a\leq b$ and $ab^2\leq n^3$.
Let $\L$ be a finite set of lines of size $n$.
If $\FF$ has positive characteristic $p$,
assume $a n \less p^2$.
Then 
\[ \I(A\times B,\L) \less a^{3/4}b^{1/2}n^{3/4} +n . \] 
\end{theorem}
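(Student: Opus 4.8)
The plan is to combine a Cauchy--Schwarz step with an estimate -- proved by reduction to Rudnev's point--plane incidence theorem -- for the number of pairs of points of $A\times B$ that lie on a common line of $\L$. First I would dispose of the axis-parallel lines: a vertical line meets $A\times B$ in at most $b$ points and only the $\leq a$ vertical lines through points of $A$ matter, and similarly for horizontal lines, so these lines contribute $O(ab)$ incidences, which is within the claimed bound since $ab\less a^{3/4}b^{1/2}n^{3/4}$ is equivalent to $ab^2\leq n^3$. Hence I may assume every $\ell\in\L$ is $y=s_\ell x+t_\ell$ with $s_\ell\neq 0$. Writing $r_\ell=|(A\times B)\cap\ell|$, so $\I:=\I(A\times B,\L)=\sum_\ell r_\ell$, and noting at most $n$ lines carry a point, Cauchy--Schwarz gives $\I^2\leq n\sum_\ell r_\ell^2=n(\I+Q)$, where
\[Q:=\sum_\ell r_\ell(r_\ell-1)=\bigl|\{(\ell,a_1,a_2,b_1,b_2):a_1\neq a_2,\ b_1=s_\ell a_1+t_\ell,\ b_2=s_\ell a_2+t_\ell\}\bigr|\]
counts ordered pairs of distinct points of $A\times B$ on a common line of $\L$ ($a_1\neq a_2$ since distinct points of a non-vertical line have distinct $x$-coordinates). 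If $Q\less a^{3/2}bn^{1/2}$, then $\I^2\less a^{3/2}bn^{3/2}+n\I$, and solving this quadratic inequality gives exactly $\I\less a^{3/4}b^{1/2}n^{3/4}+n$, so everything reduces to this one estimate.

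To prove $Q\less a^{3/2}bn^{1/2}$, note first the cheap bounds $Q\leq(ab)^2$ and, using $r_\ell\leq a$ together with the elementary bound $\I\less an$ (obtained by splitting $\L$ into parallel classes, each class of size $n_s$ contributing $\less a\min(b,n_s)$), $Q\less a^2n$; the minimum of these two is already $\less a^{3/2}bn^{1/2}$ unless $n<ab^2$ and $b^2<an$, so Rudnev's theorem is only needed in that intermediate range. There, eliminating $t_\ell$ from the two defining equations gives $b_1-b_2=s_\ell(a_1-a_2)$, and cross-multiplying gives $a_2b_1-a_1b_2=t_\ell(a_2-a_1)$; together these amount to the identity
\[(a_1-a_2,\ b_1-b_2,\ a_1b_2-a_2b_1)=(a_1-a_2)\,(1,\ s_\ell,\ t_\ell),\]
so every solution counted by $Q$ is an incidence between a point of $\FF^3$ built from a pair of grid points and a plane determined by $\ell$. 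I would package this as a point--plane incidence problem in $\FF^3$ whose point set is (projectively equivalent to) a Cartesian product of size $O(an)$, with one factor coming from $A$ and the other from $\L$; this is precisely why the hypothesis $an\less p^2$ is imposed, as it is exactly Rudnev's restriction on the number of points. Rudnev's bound (of the shape $\less MN^{1/2}+kN$) then yields, after undoing the projective normalisation, $Q\less a^{3/2}bn^{1/2}$ plus a lower-order term coming from the ``$kN$'' part, which is absorbed into the error terms already shown to be harmless under $ab^2\leq n^3$.

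The delicate point -- and the step I expect to be the main obstacle -- is setting up this point--plane reduction so that Rudnev's theorem is genuinely applicable: the planes must be in sufficiently general position (not all parallel, and not all through a common line), since otherwise the bound degenerates to the trivial two-dimensional one, and one must control the parameter $k$ measuring the largest number of collinear points in the $\FF^3$ point set. Lines rich in that point set correspond to degenerate sub-configurations on the $A\times B$ side (for instance a single line of $\L$ carrying unusually many grid points, or strong additive/multiplicative structure inside $A$ or $B$), and these must be identified and removed beforehand, their contribution being bounded by a separate and easier argument and folded into the error terms.
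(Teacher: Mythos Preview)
Your Cauchy--Schwarz is applied over the wrong variable, and this is not a cosmetic difference: it leads to a quantity $Q$ that cannot be fed into Rudnev's theorem in the way you describe.

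Concretely, your identity $(a_1-a_2,\,b_1-b_2,\,a_1b_2-a_2b_1)=(a_1-a_2)(1,s_\ell,t_\ell)$ is just the statement that the line through $(a_1,b_1)$ and $(a_2,b_2)$ is $\ell$; it records a coincidence of two \emph{points} in the dual plane, not a point--plane incidence in $\FF^3$. There is no evident way to package $Q$ as incidences between a point set of size $O(an)$ and a set of planes: the quantity $Q$ still carries the set $B$, since it counts pairs in $A\times B$, and the two linear constraints $b_1=s_\ell a_1+t_\ell$, $b_2=s_\ell a_2+t_\ell$ cut out a \emph{line} in any three-dimensional parametrisation, not a plane. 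Even granting a setup with $|\mathcal{R}|\approx an\le|\mathcal{S}|$, Rudnev would give at best $Q\ll (an)^{3/2}$, hence $\I^2\ll nQ\ll a^{3/2}n^{5/2}$ and $\I\ll a^{3/4}n^{5/4}$; since $a\le n$ (from $a\le b$ and $ab^2\le n^3$), this is no better than the trivial bound $\I\le an$. The target $Q\ll a^{3/2}b\,n^{1/2}$ is genuinely smaller (in your ``intermediate range'' one has $b<n$), and no arrangement with point set of size $an$ can reach it.

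The paper's proof differs precisely at the Cauchy--Schwarz step: it sums over $y\in B$ rather than over $\ell\in\L$. Writing $\I=\sum_{y\in B}|\{(x,\ell)\in A\times\L:\ s_\ell x+t_\ell=y\}|$ and applying Cauchy--Schwarz gives $\I\le b^{1/2}|E|^{1/2}$ with
\[
E=\bigl|\{(x,\ell,x',\ell')\in (A\times\L)^2:\ s_\ell x+t_\ell=s_{\ell'}x'+t_{\ell'}\}\bigr|.
\]
The point is that $E$ no longer involves $B$ at all, and it is \emph{exactly} a point--plane incidence count: take points $(x,s_{\ell'},t_{\ell'})\in A\times\L^*$ and planes $s_\ell X+t_\ell=x'S'+T'$ indexed by $(x',\ell)\in A\times\L$. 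Both sets have size $an$, the collinearity parameter is controlled by pruning large pencils, and Rudnev gives $|E|\ll (an)^{3/2}$, whence $\I\ll b^{1/2}(an)^{3/4}=a^{3/4}b^{1/2}n^{3/4}$. So the missing idea is to pull $B$ out first; once you do that, the point--plane reduction is one line, and the ``delicate point'' you anticipated (controlling $k$) is handled by the simple pencil-removal step.
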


When $a=b = m^{1/2}$, the bound in Theorem \ref{thm:cartszemtrot} becomes $O(m^{5/8}n^{3/4}+n)$, 
which improves a similar statement in Aksoy Yazici et al. \cite{AMRS}
(for comparison, the bound in \cite{AMRS} is $O(m^{3/4}n^{2/3}+n)$).
Note that our bound has the somewhat unusual property that it is better for uneven products, i.e., for products $A\times B$ of a fixed size $ab$, 
the bound gets better as $a/b$ decreases.

We use Theorem \ref{thm:cartszemtrot} to prove Theorem \ref{thm:genszemtrot}, 
but it is also interesting in its own right.
Indeed, in some applications of point-line incidence bounds, 
the point set is a Cartesian product;
see Sections \ref{sec:appscart} and \ref{sec:exppoly} for several examples.
Theorem~\ref{thm:cartszemtrot} can also be used to obtain a near-sharp estimate on the number of collinear quadruples in a planar point. 
We refer the reader to Petridis \cite{petridis} for this estimate, and a subsequent work of Murphy et al.  \cite{mprnrs} for a wealth of applications.

In general, Theorem \ref{thm:cartszemtrot} is quantitatively weaker than the Szemer\'edi-Trotter bound \eqref{eq:szemtrot}.
If, for instance, we consider Cartesian products $\P=A\times B$ with $|A|=|B|$ and $|\L|=|\P|=|A|^2$, 
then Theorem~\ref{thm:cartszemtrot} gives the bound $\I(\P,\L)\ll |A|^{11/4}$,
whereas \eqref{eq:szemtrot} gives $\I(\P,\L)\ll |A|^{8/3}$ (over $\R$).
Nevertheless, 
Theorem \ref{thm:cartszemtrot} is optimal for certain sets of points and lines, as the following construction of Elekes \cite{E02} demonstrates. 
\begin{example}\label{ex:elekes}
Let $a,c$ be integers.
If $\FF$ has positive characteric $p$, assume that $2ac<p$.
We define a point set by
$$\P=\{(i,j): i =1,\dots ,a, ~j =1,\dots ,2ac \}$$
and a line set by
$$\L =\{y=sx+t: s=1,\dots, c,~ t =1,\dots, ac\}\,.$$
Using the condition $2ac<p$, we have $|\P| =2a^2c$ and $|\L|=ac^2$.
Every line in $\L$ contains $a$ points of $\P$, so $\I(\P,\L)= a^2c^2$.
Theorem~\ref{thm:cartszemtrot} yields the matching bound
$$ \I(\P,\L)\ll a^{3/4}(2ac)^{1/2}(ac^2)^{3/4}\ll a^2c^2\,.$$
\end{example}

Note that Example \ref{ex:elekes} works for any choice of the sizes $|\P|$ and $|\L|$; roughly speaking, we can set $a\approx (|\P|^2/|\L|)^{1/3}$ and $c\approx (|\L|^2/|\P|)^{1/3}$.
However, the choice of $|\P|$ and $|\L|$ may force the Cartesian product $\P = A\times B$ to be rather uneven, in the sense that $|B|$ is much larger than $|A|$.
Given that the Szemer\'edi-Trotter bound \eqref{eq:szemtrot} is better for more balanced products over $\R$, 
it appears likely that Theorem~\ref{thm:cartszemtrot} is not optimal for all choices of $|A|$, $|B|$ and $|L|$.

\subsection{Applications}
Theorem \ref{thm:genszemtrot} and Theorem \ref{thm:cartszemtrot} lead to several improved bounds in well-known problems.
We summarize the applications here, and in Section \ref{sec:apps} we will provide some background and full proofs for most of the results. 

First of all, 
we use Theorem \ref{thm:cartszemtrot} to deduce the sum-product bounds
\[\max\{|A+A|,|A\cdot A|\} \more |A|^{6/5} \hspace{10pt}\text{and} \hspace{10pt} |A\cdot (A+1)| \more |A|^{6/5}\]
for a finite set $A\subset \FF$ (assuming $|A|\less p^{5/8}$ in positive characteristic $p$).
The first bound was originally obtained  
by Roche-Newton, Rudnev and Shkredov \cite{RRS}, but here we recover it using Elekes's original argument for \eqref{eq:elekes}.
The second inequality improves slightly on a result 
from \cite{RSS}.
We also reprove (assuming $|A|\less p^{2/3}$) the bounds 
\[|A+AA|\more |A|^{3/2}\hspace{10pt}\text{and} \hspace{10pt}
|A(A+A)|\more |A|^{3/2},\]
which were first proved in  \cite{RRS} and \cite{AMRS} respectively.
Next, we use Theorem \ref{thm:cartszemtrot} to prove that the polynomial $f(x,y) = x^2+xy$ satisfies 
\[|f(A,B)| \more N^{5/4} \]
for any finite sets $A,B\subset \FF$ with $|A|=|B|=N$ (assuming $N\less p^{2/3}$ in positive characteristic $p$).
This improves the exponent for a result of Bourgain \cite{Bourgain05}.

We give two geometric applications of Theorem \ref{thm:genszemtrot}.
For a set $\P$ in $\FF^2$, 
write $\Delta(\P) = \{(p_x-q_x)^2+(p_y-q_y)^2: p,q\in \P\}$ for the set of squared `Euclidean distances' determined by $\P$.
We prove that
\[ |\Delta(\P)| \more m^{8/15},\]
assuming $m\less p^{15/11}$ if $\FF$ has positive characteristic $p$, 
and assuming $\Delta(\P)\neq \{0\}$ if $-1$ is a square in $\FF$.
This improves on a result of Bourgain, Katz and Tao \cite{BKT}.
Next, we prove the following analogue of a theorem of Beck, improving on Jones \cite{Jones12}.
For a finite set $\P$ of $m$ points in $\FF^2$ (if $\FF$ has positive characteristic $p$, assume $m\less p^{7/6}$),
either $\P$ has $\Omega(m)$ points on a line, or $\P$ determines $\Omega(m^{8/7})$ lines.

Finally, we record in this note that our Theorem \ref{thm:genszemtrot} can be plugged into a result of Lewko \cite{Lewko} to give an improved restriction estimate for the paraboloid in $\F^3$. A contextual discussion of this problem as well as definitions of forthcoming notation are found in \cite{Lewko,MT}.
Specifically, 
combining \cite[Theorem 2]{Lewko} with Theorem \ref{thm:genszemtrot} shows (assuming $-1$ is not a square in $\F$) that the statement $\mathcal{R}^*(2\to 68/19+\eps)\less 1$ holds for all $\eps>0$ for the paraboloid $S$ defined by $z=x^2+y^2$ in $\F^3$, 
which means that 
\[\|(gd\sigma)^\vee\|_{L^{68/19+\eps}(S,d\sigma)} \less \|g\|_{L^{2}(\F^3,d\sigma)}. \]
For comparison, the exponent $68/19 = 18/5 - 2/95 \leq 3.579$ improves on Lewko's exponent $745/207 = 18/5 - 1/1035 \geq 3.599$,
which in turn improved on the exponent $18/5$ obtained by Mockenhaupt and Tao \cite{MT}. We refer the interested reader to \cite{Lewko} for a thorough treatment of this problem.

\subsection{Discussion}
We will deduce Theorem \ref{thm:genszemtrot} from Theorem \ref{thm:cartszemtrot}, and we will derive Theorem \ref{thm:cartszemtrot} from the following point-plane incidence bound of Rudnev \cite{Rudnev}.

\begin{theorem}[Rudnev]\label{thm:rudnev}
Let $\mathcal{R}$ be a set of $r$ points in $\FF^3$ and let $\mathcal{S}$ be a set of $s$ planes in $\mathbb{F}^3$, with $r\leq s$. 
If $\mathbb{F}$ has positive characteristic $p$, suppose that  
$r\less p^2$. 
Let $k$ be the maximum number of collinear points in $\mathcal{R}$. 
Then
\[ \I(\mathcal{R},\mathcal{S})\less r^{1/2}s +k s.\] 
\end{theorem}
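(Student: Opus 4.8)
The plan is to pass to projective space $\mathbb{P}^3(\FF)$ and reduce the point-plane count to an incidence problem for points and \emph{lines} in space, where the rigidity of lines in $\mathbb{P}^3$ --- classical facts about transversals and reguli --- supplies the leverage that a purely combinatorial argument lacks. After a projective transformation we may assume that no plane of $\mathcal{S}$ is ``vertical'', so that each plane has the form $z = ax+by+c$ and may be identified with a point $(a,b,c)\in\FF^3$; then $(x,y,z)\in\mathcal{R}$ lies on $(a,b,c)$ exactly when $ax+by-z+c=0$. The incidences discarded in this normalisation, and any trivially degenerate regimes (for instance a single line of $\mathbb{P}^3$ lying in very many planes of $\mathcal{S}$, which contributes at most $ks$ incidences), are absorbed into the $ks$ term.

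The engine is a Cauchy-Schwarz inequality over the points: writing $d(P)$ for the number of planes of $\mathcal{S}$ through $P\in\mathcal{R}$, we have $\I(\mathcal{R},\mathcal{S})^2\le r\sum_{P}d(P)^2$, and $\sum_P d(P)^2$ counts triples $(P,\pi,\pi')$ with $P\in\pi\cap\pi'$. Separating the diagonal $\pi=\pi'$, which contributes $\I(\mathcal{R},\mathcal{S})$, and grouping the remaining triples by the line $\ell=\pi\cap\pi'$ of $\mathbb{P}^3$, one arrives at
\[ \I(\mathcal{R},\mathcal{S})^2 \;\less\; r\,\I(\mathcal{R},\mathcal{S}) \;+\; r\sum_{\ell} r(\ell)\, s(\ell)^2 , \]
where $\ell$ runs over the lines contained in at least two planes of $\mathcal{S}$, $r(\ell)=|\ell\cap\mathcal{R}|\le k$, and $s(\ell)$ denotes the number of planes of $\mathcal{S}$ through $\ell$. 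The whole problem is thereby reduced to an estimate for $\sum_\ell r(\ell)s(\ell)^2$.

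Using only $r(\ell)\le k$ together with the fact that two planes meet in at most one line (so that $\sum_\ell\binom{s(\ell)}{2}\le\binom{s}{2}$) yields merely $\I(\mathcal{R},\mathcal{S})\less r+s\sqrt{rk}$, which is weaker than the asserted bound when $k$ is of moderate size; the content of the theorem is precisely to replace $\sqrt{rk}$ by $\sqrt r+k$. The gain has to come from the fact that a family of lines that pairwise span many distinct planes is forced onto a low-degree \emph{ruled} surface --- most concretely, the transversals of three pairwise-skew lines form one ruling of a smooth quadric --- so that lines carrying many planes of $\mathcal{S}$ cannot also be collectively rich in points of $\mathcal{R}$, and on each such ruled surface the incidences with a $k$-bounded point set can be bounded directly. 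I expect this ruled-surface / degree-reduction step to be the main obstacle: over $\R$ one would invoke polynomial partitioning and the Guth-Katz structure theory of surfaces containing many lines, but over an arbitrary field that machinery is unavailable, so the argument must be carried out by elementary algebraic geometry. This is exactly where the hypothesis $r\less p^2$ is used: a surface of bounded degree over $\F$ carries only $O(p^2)$ points, so the condition prevents $\mathcal{R}$ from pathologically filling such a surface, and more broadly it rules out the subfield-type obstructions that already defeat Szemer\'edi-Trotter for large sets. The remaining work --- splitting the lines dyadically in $r(\ell)$ and $s(\ell)$, assembling the resulting bound on $\sum_\ell r(\ell)s(\ell)^2$, feeding it back into the displayed inequality, and checking that the rich-line contribution is absorbed by $ks$ --- is routine bookkeeping.
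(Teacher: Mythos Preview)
The paper does not itself prove Theorem~\ref{thm:rudnev}; it is quoted from Rudnev~\cite{Rudnev} (with a streamlined argument in~\cite{deZeeuw}), and the paper only indicates the mechanism in the surrounding discussion: convert point--plane incidences to line--line intersections in $\FF^3$, then apply the Guth--Katz bound~\cite{GK,Kollar} on intersecting pairs of lines.

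Your route is different from that mechanism, and it has a structural gap. The identity
\[
\sum_{\ell:\,s(\ell)\ge 2} r(\ell)\,s(\ell)\bigl(s(\ell)-1\bigr)
\;=\;
\sum_{P\in\mathcal R} d(P)\bigl(d(P)-1\bigr)
\]
(both sides count ordered triples $(P,\pi,\pi')$ with $\pi\neq\pi'$ and $P\in\pi\cap\pi'$) shows that your ``reduction'' to $\sum_\ell r(\ell)s(\ell)^2$ is a rewriting, not a reduction: up to the diagonal it is exactly $\sum_P d(P)^2$, so your displayed inequality is Cauchy--Schwarz restated. Any bound of the required strength on $\sum_\ell r(\ell)s(\ell)^2$ is therefore equivalent to a direct bound on the second moment $\sum_P d(P)^2$, which is the original problem in another guise. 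Your ruled-surface heuristic does not act on the intersection lines $\ell=\pi\cap\pi'$ in any evident way: there are up to $\binom{s}{2}$ of them with no a priori algebraic constraint, and the transversal/regulus facts you cite concern lines on a common quadric, which these have no reason to be.

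The idea you are missing is Rudnev's bilinear parametrisation. Writing the incidence between the point $(a,b,c)$ and the plane $z=\alpha x+\beta y+\gamma$ as $c-\beta b=\alpha a+\gamma$, one assigns to each point the line $\{X=a,\ Z=c-bY\}$ and to each plane the line $\{Y=\beta,\ Z=\alpha X+\gamma\}$ in a fresh copy of $\FF^3$; these two lines meet exactly when the incidence holds. Now $\I(\mathcal R,\mathcal S)$ is a count of intersecting pairs among $r+s\le 2s$ explicit lines, and the Guth--Katz/Koll\'ar theorem applies \emph{directly}, with no Cauchy--Schwarz loss. Here $k$ controls how many of the constructed lines can lie in a common plane, and the hypothesis $r\less p^2$ enters through the degree of the interpolating surface in the polynomial-method step, not through a crude point count on a bounded-degree surface. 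Ruled surfaces do appear, but in bounding intersections among the \emph{constructed} lines; your Cauchy--Schwarz step discards precisely the bilinear structure that makes this conversion possible.
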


This bound is tight if $k\geq r^{1/2}$; if $k$ of the points are on a line, and all $s$ planes contain that line, then there are $ks$ incidences.
Over $\R$, there are known to be better bounds for small values of $k$.

Theorem \ref{thm:rudnev} is based on a ground-breaking result of Guth and Katz 
\cite{GK},
which states that a set of $N$ lines in $\FF^3$, 
with no $N^{1/2}$ lines on a quadric surface,
 determines $O(N^{3/2})$ intersection points.
This result played a role in the resolution of the Erd\H os distinct distance problem in $\R^2$ in \cite{GK}.
The proof in \cite{GK} uses interpolation to capture the lines in an algebraic surface of degree $N^{1/2}$, and then analyses the intersections between the lines in each irreducible component of that surface.
The result in \cite{GK} was stated over $\R$, but essentially the same proof works over any field,
if in positive characteristic one adds the restriction $N\less p^2$ (see Rudnev \cite{Rudnev} and Koll\'ar \cite[Corollary 40]{Kollar}). A recent work by the second author \cite{deZeeuw} shortened Rudnev's proof, removing some of the technicalities. 

To summarise, a proof from scratch of Theorem \ref{thm:genszemtrot} would proceed as follows: 
We cover $\P$ by grids, 
for each grid we convert the point-line incidences to point-plane incidences,
these are then converted to line-line intersection points in space (as in \cite{Rudnev}), and these are finally bounded using the algebraic techniques in \cite{GK}.
This gives a nice picture of the connections between the different types of incidence bounds.

Previous approaches to incidence problems over finite fields relied on techniques from additive combinatorics, and in particular the Balog-Szemer\'edi-Gowers (BSG) theorem \cite{BSG}. 
A number of recent papers, including \cite{RRS, AMRS, RSS}, 
have successfully replaced the traditional application of BSG by more geometric arguments, leading to several quantitative improvements.
Our proof of Theorem \ref{thm:genszemtrot} is another example where BSG is replaced by the geometry inherent within the problem.

An obvious question is how to further improve the bounds in Theorem \ref{thm:genszemtrot}
and Theorem \ref{thm:cartszemtrot}.
We note that, even if the main term in the bound of Theorem \ref{thm:cartszemtrot} were improved to $O(m^{2/3}n^{2/3})$, our proof would not lead to the same bound in Theorem \ref{thm:genszemtrot} (the result would be $O(m^{8/11}n^{8/11})$).
Another interesting open problem, first posed by Bourgain \cite{Bourgain05}, is whether similar bounds can be obtained for non-linear objects, like circles, conics, or other algebraic curves.
Over $\R$ and $\C$ such bounds are known (see e.g. \cite{Dvir,SSZ}), 
and over $\F$ Bourgain \cite{Bourgain12} proved an incidence bound for hyperbolas.

\subsection{Organisation}
The rest of the paper is structured as follows.
In Section \ref{sec:cartszemtrot} we prove Theorem \ref{thm:cartszemtrot},
while in Section \ref{sec:grids} and Section \ref{sec:genszemtrot} we use Theorem \ref{thm:cartszemtrot} to prove Theorem \ref{thm:genszemtrot}.
Finally, in Section \ref{sec:apps}, we work through some of the applications of these new incidence bounds.

 
\section{A point-line incidence bound on Cartesian products}\label{sec:cartszemtrot}

As a first step towards Theorem \ref{thm:genszemtrot}, we prove a stronger point-line incidence bound when the point set is a Cartesian product.
The idea is to think of point-line incidences as solutions $(x,y,s,t)$ of the equation $xs+t=y$, where $(x,y)$ is a point and $(s,t)$ represents a line. 
The number of such solutions can be related to the number of solutions of $xs+t=x's'+t'$.
Such a bilinear equation in six variables can then be turned into a point-plane incidence problem, to which Theorem \ref{thm:rudnev} applies.
The fact that the point set is a Cartesian product is crucial, because it allows us to `split' the variable $x$ from the variable $y$.

The key to relating the solutions of $xs+t=y$ to the solutions of $xs+t=x's'+t'$ is the Cauchy-Schwarz inequality.
Although it is fairly standard, 
let us explicitly formalise this connection, 
since we will use it again in Section \ref{sec:apps}.

\begin{lemma}\label{lem:cauchyschwarz}
Let $X$ and $Y$ be finite sets, and let $\varphi:X\to Z$ be a function, where $Y\subset Z$.
Then
\[|\{(x,y)\in X\times Y: \varphi(x)=y\}| 
\leq |Y|^{1/2} \cdot |\{(x,x')\in X\times X: \varphi(x) = \varphi(x')\}|^{1/2}.\]
\end{lemma}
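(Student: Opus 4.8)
The statement to prove is Lemma~\ref{lem:cauchyschwarz}, which is a clean application of the Cauchy--Schwarz inequality.

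The plan is to count the solutions of $\varphi(x) = y$ by fibering over the values $y \in Y$. For each $y \in Y$, let $N(y) := |\{x \in X : \varphi(x) = y\}|$ denote the number of preimages of $y$ lying in $X$. Then the left-hand side is exactly $\sum_{y \in Y} N(y)$, where the sum is over those $y$ that are actually hit; equivalently, we may sum over all of $Y$ and set $N(y) = 0$ when $y$ has no preimage in $X$.

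Next I would apply Cauchy--Schwarz to this sum against the all-ones function on $Y$:
\[
\sum_{y \in Y} N(y) \;\leq\; |Y|^{1/2} \left( \sum_{y \in Y} N(y)^2 \right)^{1/2}.
\]
The final step is to interpret $\sum_{y \in Y} N(y)^2$ as a count of pairs. For each $y$, $N(y)^2$ counts ordered pairs $(x,x') \in X \times X$ with $\varphi(x) = \varphi(x') = y$; summing over $y \in Y$ therefore counts pairs $(x,x') \in X \times X$ with $\varphi(x) = \varphi(x') \in Y$. Dropping the constraint that the common value lies in $Y$ only increases the count, so $\sum_{y \in Y} N(y)^2 \leq |\{(x,x') \in X \times X : \varphi(x) = \varphi(x')\}|$. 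Combining the two displayed inequalities gives the claim.

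There is no real obstacle here; the only point requiring a modicum of care is the bookkeeping about the codomain: $\varphi$ maps into $Z \supseteq Y$, so a priori some values $\varphi(x)$ lie outside $Y$ and contribute to the right-hand energy term but not to the left-hand count --- this is precisely why the inequality (rather than an identity) is the right formulation, and why relaxing the constraint in the last step is harmless. I would keep the write-up to a few lines.
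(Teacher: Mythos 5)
Your proposal is correct and follows exactly the paper's own argument: fiber over $y\in Y$, apply Cauchy--Schwarz against the all-ones function, and relax the constraint $\varphi(x)=\varphi(x')\in Y$ to $\varphi(x)=\varphi(x')$ in $Z$. No differences worth noting.
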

\begin{proof}
Set $X_z := \{x\in X: \varphi(x) = z\}$ for any $z\in Z$.
By the Cauchy-Schwarz inequality, we have
\[|\{(x,y)\in X\times Y: \varphi(x)=y\}| =\sum_{y\in Y} |X_y| \leq |Y|^{1/2}\sum_{y\in Y}|X_y|^2. \]
Combining this with
\[ \sum_{y\in Y} |X_y|^2 \leq 
\sum_{z\in Z} |X_z|^2 \leq 
|\{(x,x')\in X\times X: \varphi(x) = \varphi(x')\}|\]
proves the lemma.
\end{proof}

\begin{proof}[Proof of Theorem \ref{thm:cartszemtrot}]
Recall that $|\L| = n$, $|A|=a$, $|B|=b$, $a\leq b$ and $ab^2\leq n^3$.
First we show that, by modifying $\L$, we can assume the following three properties.
\begin{itemize}
\item {\bf There are no vertical lines in $\L$.}\\
We remove all vertical lines from $\L$; they together contribute at most $ab$ incidences, 
and the assumption $ab^2\leq n^3$ implies $ab\leq a^{3/4}b^{1/2}n^{3/4}$.
\item {\bf We have $b^2\leq an$.}\\
Given that there are no vertical lines,
we have the bound $\I(A\times B,\L)\leq an$, since each line  from $\L$ intersects each of the $a$ vertical lines covering $A\times B$ at most once.
If $b^2> an$, 
then we get $\I(A\times B,\L)\leq an\leq a^{3/4}b^{1/2}n^{3/4}$.
\item {\bf At most $a^{1/2}n^{1/2}$ lines of $\L$ are concurrent or parallel.}\\
We iteratively remove any pencil (a \emph{pencil} is a set of concurrent lines) of more than $a^{1/2}n^{1/2}$ concurrent or parallel lines. 
Let $n_i$ be the number of lines in the $i$-th pencil that we remove (not counting those that were removed earlier).
Then the $i$-th pencil is involved in at most $ab +n_i$ incidences.
We need at most $n/(a^{1/2}n^{1/2}) = a^{-1/2}n^{1/2}$ steps to remove all such pencils.
In pruning the line set in this manner, we discount at most $a^{-1/2}n^{1/2}\cdot ab + \sum n_i\leq a^{3/4}b^{1/2}n^{3/4}+n$ incidences, where we used the assumption $b^2\leq an$.
\end{itemize}

Since $\L$ has no vertical lines, 
the affine dual $\L^*:= \{ (c,d)\in \FF^2 : y=cx+d\in \L\}$ of $\L$ is well-defined.
Then we have\footnote{We abuse notation by denoting an element in $A\times B\times \L^*$ by $(a,b,c,d)$ instead of $(a,b, (c,d))$.}
\[\I(\P,\L) = |\{ (x,y,s,t)\in A\times B\times \L^* : xs+t=y \}|. \]
If we set
 \[E := \{(x,s,t,x',s',t')\in (A\times \L^*)^2 : xs+t = x's'+t'\}, \]
then Lemma \ref{lem:cauchyschwarz} (with $X = A\times \L^*$, $Y = B$, $Z = \FF$ and $\varphi(x,s,t) = xs+t$) gives
\begin{equation}\label{eq:csbound}
\I(A\times B,\L) 
 =b^{1/2} |E|^{1/2}.
 \end{equation}

We bound $|E|$ using the point-plane incidence bound in Theorem \ref{thm:rudnev}.
Define a point set and a plane set by
\[ \mathcal{R} := \{(x,s',t')\in A\times \L^*\},~~~~~~
\mathcal{S} := \{x s + t = x' s'+ t':(x',s,t)\in A\times \L^*\}.\] 
We have $|\mathcal{R}| = |\mathcal{S}| = an$ and $|E| = \I(\mathcal{R},\mathcal{S})$.
 
To apply Theorem \ref{thm:rudnev} we need to check its conditions. 
The condition that the number of points is $O(p^2)$ follows from the assumption that $an \less p^2$.
The condition that there are at most as many points as planes clearly holds, since $|\mathcal{R}|= |\mathcal{S}|$.
Because of the product structure of $\mathcal{R} = A\times \L^*$, the maximum number of collinear points in $\mathcal{R}$ is bounded by the maximum of $a$ 
and the maximum number of collinear points in $\L^*$.
The former is bounded by $a^{1/2}n^{1/2}$, using the fact that $a\leq n$, which follows from $a\leq b$ and $ab^2\leq n^3$.
The latter equals the maximum number of concurrent lines in $\L$, 
which by our earlier assumption is also bounded by $a^{1/2}n^{1/2}$.

Therefore, we can apply Theorem \ref{thm:rudnev} with $k = a^{1/2}n^{1/2}$ to obtain
\[\I(\mathcal{R},\mathcal{S})
\less |\mathcal{R}|^{1/2}|\mathcal{S}| + k |\mathcal{S}| 
\less a^{3/2}n^{3/2}.\]
Combining this with \eqref{eq:csbound} and $|E| = \I(\mathcal{R},\mathcal{S})$ gives
\[\I(A\times B,\L) 
\less  b^{1/2}|E|^{1/2}
\less a^{3/4}b^{1/2}n^{3/4}\,,\]
proving the theorem.
\end{proof}

Note that the term $n$ in the bound of Theorem \ref{thm:cartszemtrot} comes only from the step in the proof where we ensured that at most $a^{1/2}n^{1/2}$ lines of $\L$ are concurrent or parallel.
Also observe that we could have stated the bound in the slightly stronger form $Ca^{3/4}b^{1/2}n^{3/4} +n$ for a constant $C$.
 
\section{Finding a Cartesian product}\label{sec:grids}
In order to apply Theorem~\ref{thm:cartszemtrot} to an unstructured point set, 
we require a means to find large grids in a point set with many incidences. 
This approach was first taken in the original incidence bound over $\F$ in \cite{BKT}, 
where the authors showed that if a point set has many incidences, then a large subset of the points can be captured inside the intersection of two relatively small pencils.
Then they used the fact that the set of intersection points of two pencils is projectively equivalent to a Cartesian product.
This approach was quantitatively refined by Jones in \cite{Jones12},
who showed that, after carefully `regularising' the points, 
$\P$ can be efficiently partitioned into a number of subsets, 
each of which is covered by two relatively small pencils.

Our approach is also based on the fact that if a set is `regular' in the sense that each point lies on a similar number of lines, then there are two pencils whose intersection covers many points of $\P$.
This fact is captured in Lemma \ref{lem:twopencils} below.
This lemma is a quantitative version of Proposition 4 of \cite{Jones12}.
We avoid asymptotic notation in this section,
because in the next section we will apply Lemma \ref{lem:twopencils} inside an induction, 
where we have to be careful with the dependence of the constants.
We denote by $\overline{pq}$ the line in $\FF^2$ containing the points $p, q \in \FF^2$.

\begin{lemma}\label{lem:twopencils}
The following holds for any constants $c_2>c_1>0$.

Let $\P$ be a set of $m$ points and $\L$ a set of $n$ lines,
such that between $c_1 K$ and $c_2 K$ lines of $\L$ pass through each point in $\P$. 
Assume $K\geq 4n/(c_1m)$, $K\geq 8/c_1$ and $K^3\geq 2^6n^2/(c_1^3m)$.

Then there are distinct points $p_1,q_1\in\P$ and a set $G \subseteq \P\backslash\overline{p_1q_1}$ of cardinality $|G|\nobreak\geq\nobreak c_1^4 K^4m/(2^9n^2)$, such that 
$G$ is covered by at most $c_2K$ lines from $\L$ through $p_1$, 
and by at most $c_2K$ lines from $\L$ through $q_1$.
\end{lemma}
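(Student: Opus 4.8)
The plan is to exploit a double-counting/pigeonhole argument in the incidence structure. Since every point of $\P$ lies on between $c_1K$ and $c_2K$ lines, we have $\I(\P,\L)\geq c_1Km$. Consider ordered pairs $(p,q)$ of distinct points of $\P$ together with a line $\ell\in\L$ passing through both. On one hand, the number of such triples is at least $\sum_{\ell}i_\ell(i_\ell-1)$ where $i_\ell=|\ell\cap\P|$; by Cauchy--Schwarz and $\sum_\ell i_\ell=\I(\P,\L)\geq c_1Km$, this is $\geq (c_1Km)^2/n - c_1Km$, which under the hypothesis $K\geq 4n/(c_1m)$ (so the subtracted term is absorbed) is $\gg c_1^2K^2m^2/n$. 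On the other hand, for a fixed point $p$, the pairs $(p,q,\ell)$ with $q\in\P$, $p,q\in\ell$ number at most $\sum_{\ell\ni p}(i_\ell-1)\leq \sum_{\ell\ni p}i_\ell\leq c_2K\cdot(\text{something})$; more usefully, I would first fix $p$ and count how many points $q$ are joined to $p$ by a line carrying many points of $\P$. The point is to locate a \emph{rich point} $p_1$: one through which the lines of $\L$ collectively cover a large fraction of $\P$ with high multiplicity.

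The key step is to find two points $p_1,q_1$ such that the pencil through $p_1$ (restricted to lines of $\L$, of which there are at most $c_2K$) and the pencil through $q_1$ together capture many points. Concretely, first choose $p_1\in\P$ so that the at most $c_2K$ lines of $\L$ through $p_1$ cover at least $(c_1Km - c_2K)/1 \cdot(\text{fraction})$ points --- more precisely, averaging $\I(\P,\L)\geq c_1Km$ over the $m$ points, a typical point $p$ has its $\le c_2K$ lines covering $\Omega(c_1^2K^2m/n)$ other points (again by a Cauchy--Schwarz on the line-multiplicities within the pencil at $p$, using that the total incidences on those lines is large). Call this set $S_{p_1}\subseteq\P$. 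Now within $S_{p_1}$, which has size $\gg c_1^2K^2m/n$, every point still lies on $\ge c_1K$ lines of $\L$; repeat the same averaging argument \emph{inside} $S_{p_1}$ to find a second point $q_1\in S_{p_1}$ whose $\le c_2K$ lines of $\L$ cover $\gg c_1^2K^2|S_{p_1}|/n \gg c_1^4K^4m/n^2$ points of $S_{p_1}$. Setting $G$ to be this doubly-covered set (and discarding the $\le c_2K$ points of $G$ that happen to lie on the line $\overline{p_1q_1}$, which is harmless since $|G|$ is much larger than $c_2K$ by the hypothesis $K^3\geq 2^6n^2/(c_1^3m)$) gives the desired configuration, with the constant $c_1^4/2^9$ coming from being careful in the two Cauchy--Schwarz steps.

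I would carry this out in the order: (1) record the lower bound $\I(\P,\L)\geq c_1Km$ and the pencil sizes $\le c_2K$; (2) prove a ``one-pencil'' sublemma: for a suitable $p\in\P$, the lines of $\L$ through $p$ cover $\ge c_1^2K^2m/(2^a n)$ points, via $\sum_{\ell\ni p}i_\ell \geq$ (incidences on lines through $p$) and Cauchy--Schwarz converting $\sum i_\ell$ into $|\bigcup\ell|\geq(\sum i_\ell)^2/\sum i_\ell^2$, together with a bound $\sum_\ell i_\ell^2 \le$ (number of collinear pairs) which needs the pencil count --- this is where the hypotheses $K\ge 4n/(c_1m)$ and $K\ge 8/c_1$ are consumed; (3) apply the sublemma once to get $p_1$ and $S_{p_1}$, then again inside $S_{p_1}$ (which inherits the hypotheses, perhaps with slightly worse constants, hence the need to have stated everything with explicit $c_1,c_2$) to get $q_1$ and the covered set; (4) remove the at most $c_2K$ points on $\overline{p_1q_1}$ and check the final cardinality bound survives using $K^3\geq 2^6n^2/(c_1^3m)$.

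The main obstacle is step (2) done with \emph{explicit} constants rather than asymptotically: one must bound $\sum_{\ell\in\L}i_\ell^2$ from above without an incidence bound (which would be circular), using only that each point is in $\le c_2K$ lines, so that $\sum_\ell i_\ell^2 = \sum_\ell\sum_{p,q\in\ell\cap\P}1 + \sum_\ell i_\ell \le $ (number of ordered collinear pairs in $\P$) $+ \I(\P,\L)$, and the number of ordered collinear pairs is at most $m\cdot(\text{max points on a line through a fixed point})$ --- here the bound $i_\ell\le$ something is \emph{not} available, so instead one counts collinear pairs as $\sum_{p}\sum_{\ell\ni p}(i_\ell-1)$ and has to feed the pencil-size bound $c_2K$ back in more cleverly. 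Keeping track of which Cauchy--Schwarz eats which hypothesis, and ensuring the second application of the sublemma inside $S_{p_1}$ still has $|S_{p_1}|$ large enough for its hypotheses to hold, is the delicate bookkeeping that the proof must handle.
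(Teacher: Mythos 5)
Your high-level plan---iterate a ``one-pencil covering'' step twice, first on $\P$ to locate $p_1$ and a large covered set, then inside that set to locate $q_1$, finally handling the line $\overline{p_1q_1}$---is exactly the structure of the paper's proof. The problem is the step you yourself single out as the main obstacle: your step (2) is built on a Cauchy--Schwarz pointing in the wrong direction. You propose to lower-bound the union of the pencil at $p$ by $(\sum_{\ell\ni p} i_\ell)^2/\sum i_\ell^2$ and therefore believe you need an \emph{upper} bound on a second moment, which, as you correctly note, would be circular; you leave this unresolved. No such bound is needed: two distinct lines through a common point $p$ meet only at $p$, so the sets $(\ell\cap\P)\setminus\{p\}$, over lines $\ell\in\L$ through $p$, are pairwise \emph{disjoint}, and the pencil at $p$ covers exactly $\sum_{\ell\ni p}(|\ell\cap\P|-1)$ points of $\P\setminus\{p\}$. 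The only input required is a guarantee that some $p$ lies on many lines that are each rich in points of $\P$. The paper gets this by a popularity argument with no second moment at all: set $\L_1=\{\ell:|\ell\cap\P|\geq \I(\P,\L)/(2n)\}$, note $\I(\P,\L_1)\geq \I(\P,\L)/2$, and pick $p_1$ on at least $\I(\P,\L_1)/(2m)\geq c_1K/4$ lines of $\L_1$; disjointness then gives $|\mathcal{Q}|\geq \frac{c_1K}{4}\left(\frac{c_1Km}{2n}-1\right)\geq \frac{c_1^2K^2m}{2^4n}$, the ``$-1$'' being absorbed using $K\geq 4n/(c_1m)$. Your alternative averaging route (the lower bound $\sum_\ell i_\ell^2\geq \I(\P,\L)^2/n$, so the average over $p\in\P$ of $\sum_{\ell\ni p}i_\ell$ is at least $c_1^2K^2m/n$) is also viable, but there you must subtract the number of lines through $p$, which is only bounded above by $c_2K$; absorbing that term requires a hypothesis of the shape $K\geq 2c_2n/(c_1^2m)$, which involves $c_2$ and is not among the stated assumptions. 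The rich-line formulation keeps $c_2$ out of the hypotheses, which is why the lemma can be stated as it is.

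A smaller bookkeeping discrepancy: you plan to remove up to $c_2K$ points of $G$ on $\overline{p_1q_1}$ at the end, paying for it with $K^3\geq 2^6n^2/(c_1^3m)$. The paper instead excludes $\overline{p_1q_1}$ from the start, defining $\mathcal{R}=\{q\in\mathcal{Q}\setminus\overline{p_1q_1}:\overline{q_1q}\in\L_2\}$ and charging the loss to one discarded line of the pencil at $q_1$, i.e.\ the factor $\frac{c_1K}{4}-1$, which is where $K\geq 8/c_1$ is consumed; the hypothesis $K^3\geq 2^6n^2/(c_1^3m)$ is used to absorb the ``$-1$'' in the factor $\frac{c_1K|\mathcal{Q}|}{2n}-1$. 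Since this lemma is subsequently fed into an induction with explicit constants, these accounting choices are what make the stated hypotheses sufficient.
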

\begin{proof}
Let
\[ \L_1:=\left\{\ell\in\L: |\ell\cap \P|\geq
\I(\P,\L)/(2n)
\right\}.\]
Then we have $\I(\P,\L_1)\geq\I(\P,\L)/2$,
since the set of lines not contained in $\L_1$ contribute fewer than $n\cdot \I(\P,\L)/(2n) = \I(\P,\L)/2$ incidences to $\I(\P,\L)$.
Let $p_1\in \P$ be a point incident to at least $\I(\P,\L_1)/(2m)$ lines in $\L_1$. 
Such a point exists since the set of points that are incident to fewer than $\I(\P,\L_1)/2m$ lines contribute fewer than $m\cdot \I(\P,\L_1)/(2m) = \I(\P,\L_1)/2$ incidences to $\I(\P,\L_1)$.

Note that the assumptions of the lemma imply $\I(\P,\L)\geq c_1Km$,
so we have $\I(\P,\L)/(2n)\geq c_1 Km/(2n)$ and $\I(\P,\L_1)/(2m)\geq (\I(\P,\L)/2)/(2m)\geq c_1K/4$.
Thus the point $p_1$ is incident to at least $c_1K/4$ lines from $\L_1$, and each line in $\L_1$ is incident to at least $(c_1 Km/(2n))-1$ points in $\P\backslash\{p_1\}$. 
It follows that  
\[\mathcal{Q} := \{q\in\P\backslash\{p_1\} :\overline{p_1q}\in\L\}\]
satisfies
\begin{equation}\label{eq:Qbound}
|\mathcal{Q}| \geq \frac{c_1 K }{4}\left(\frac{c_1 Km}{2n}-1\right) \geq \frac{c_1^2 K^2 m}{2^4 n},
\end{equation}
where in the last inequality we used the assumption $K\geq 4n/(c_1m)$.

The points in $\mathcal{Q}$ still have the property that between $c_1K$ and $c_2K$ lines of $\L$ pass through them, so we can repeat the argument above,
with $\mathcal{Q}$ in the role of $\P$, 
and the same line set $\L$.
We let 
\[ \L_2:=\left\{\ell\in\L: |\ell\cap \mathcal{Q}|\geq
\I(\mathcal{Q},\L)/(2n)
\right\}.\]
As above, 
we have $\I(\mathcal{Q},\L_2)\geq \I(\mathcal{Q},\L)/2$,
and there is a point $q_1\in \mathcal{Q}$ that is incident to at least $\I(\mathcal{Q},\L_2)/(2|\mathcal{Q}|)\geq c_1K/4$ lines in $\L_2$. 
Thus $q_1$ is incident to at least $(c_1K/4)-1$ lines from $\L_2$ other than the line $\overline{p_1q_1}$, 
and each line in $\L_2$ is incident to at least $\I(\mathcal{Q},\L)/(2n)\geq c_1 K|\mathcal{Q}|/(2n)$ points in $\P$. 
Thus the set
\[\mathcal{R} := \{q\in\mathcal{Q}\backslash\overline{p_1q_1} :\overline{q_1q}\in\L_2\}\] satisfies
\[|\mathcal{R}| \geq \left(\frac{c_1 K }{4} -1\right)\left(\frac{c_1 K|\mathcal{Q}|}{2n}-1\right)\geq \frac{c_1 K }{8}\cdot \frac{c_1 K|\mathcal{Q}|}{4n} = \frac{c_1^2 K^2 |\mathcal{Q}|}{2^5 n}\geq \frac{c_1^4 K^4 m}{2^9 n^2},\]
where in the second inequality we used $K\geq 8/c_1$ in the first factor,
 and both \eqref{eq:Qbound} and $K^3\geq 2^6n^2/(c_1^3m)$ in the second factor,
while in the last inequality we used \eqref{eq:Qbound}.

As $p_1$ is incident to at most $c_2 K$ lines, 
$\mathcal{Q}$ is covered by at most $c_2K$ lines from $\L$ that pass through $p_1$, 
and therefore so is $\mathcal{R}\subset \mathcal{Q}$.
Similarly, $\mathcal{R}$ is covered by at most $c_2K$ lines from $\L$ that pass through $q_1$.
Therefore,
we can choose the point set $G$ as a subset of $ \mathcal{R}$ with $|G|\geq c_1^4 K^4m/(2^9n^2)$.
This concludes the proof.
\end{proof}

\section{Proof of Theorem~\ref{thm:genszemtrot}}\label{sec:genszemtrot}

We will prove that there exists a constant $C$
such that, for all $\P$ and $\L$  with $n^{7/8}<m<n^{8/7}$, we have
$$\I(\P,\L)< Cm^{11/15}n^{11/15}.$$
We do this by induction, keeping $n$ fixed and varying $m$. 
The inductive hypothesis is that for any point set $\mathcal{P'}$ satisfying $|\mathcal{P'}|=m'$, 
where $n^{7/8}<m'<m$, 
we have $\mathcal{I}(\mathcal{P'},\mathcal{L})<C(m')^{11/15}n^{11/15}$.
The base case of the induction is any $m$ such that $n^{4/11}< m < n^{7/8}$, for which Lemma \ref{lem:combinatorial} gives
\[ \I(\P,\L)\leq mn^{1/2}+n \leq 2 m^{11/15}n^{11/15}.\]

We argue by contradiction;
we will suppose that $\I(\P,\L)=Cm^{11/15}n^{11/15}$,
and we show, using the inductive hypothesis and the assumption $n^{7/8}<m<n^{8/7}$, that for a sufficiently large choice of $C$, independent of $m$ and $n$, a contradiction occurs.
We will work with explicit constants in the proof; we choose the constants for ease of comprehension, and we make no attempt to optimise them. 

As said, we suppose that $n^{7/8}<m<n^{8/7}$ and $I:=\I(\P,\L)=Cm^{11/15}n^{11/15}$.
Set $K := I/m$.
We introduce two subsets of $\P$: 
$$D:=\{p\in \P:\text{ there are at most } 2^{-11}K
\text{ lines through } p\}$$
and 
$$E:=\{p\in \P:\text{ there are at least } 2^{15}K
\text{ lines through } p\}.$$
One can think of $D$ as the set of points with a dearth of incidences, and $E$ as the set of points with an excess of incidences.

It is evident that $D$ contributes at most $2^{-11}Km = 2^{-11}I$ incidences to $I$.
Similarly, we have the estimate $I\geq \I(E,\L) \geq 2^{15} K|E|$,
which implies $|E|\leq 2^{-15}m$. 
By induction we have
\[\I(E,\L) < C\left(2^{-15}m\right)^{11/15}n^{11/15}
<2^{-11}I.\]
So $E$ also contributes at most $2^{-11}I$ incidences to $I$.

Let $A:=\P\backslash (E\cup D)$ be the remaining points.
By definition of $D$ and $E$, every point in $A$ is incident to at least $c_1K$ and at most $c_2K$ lines of $\L$.
From the previous paragraph, we know that $A$ contributes at least $\left(1 - 2\cdot 2^{-11}\right)I$ incidences to $I$.

We repeatedly use Lemma~\ref{lem:twopencils} with $c_1 = 2^{-11}$ and  $c_2 = 2^{15}$,
to get the following sequence of grid-like subsets.
Let $A_1:=A$. 
We iteratively choose $G_i\subset A_i$ as in Lemma~\ref{lem:twopencils},
so there exist distinct points $p_i, q_i$ such that 
$G_i$ is covered by at most $2^{15}K$ lines from $\L$ through $p_i$, 
and by at most $2^{15}K$ lines from $\L$ through $q_i$.
Then we set $A_{i+1}=A_i\backslash G_i$ and repeat.
We terminate this process at the $s$-th step when $|A_{s+1}|\leq 2^{-15}m$ (allowing for the possibility that $s=0$, which happens if $|A| \leq 2^{-15}m$, and the process is empty).
This results in a sequence $A_1 \supseteq A_2\supseteq \dots \supseteq A_{s+1}$,
with 
\[|G_i|\geq \frac{c_1^4K^4 |A_i|}{2^9n^2} 
\geq \frac{(2^{-11})^4K^4 (2^{-15}m)}{2^9n^2}
\geq \frac{K^4 m}{2^{68}n^2}\]
As the $G_i$ are disjoint by construction, the process terminates after at most 
\[s\leq \frac{m}{\min_i\{|G_i|\}} \leq \frac{2^{68} n^2}{K^4}\] 
steps.
It is a straightforward calculation to show that throughout the process, the conditions 
$K\geq 4n/(c_1|A_i|)$, $K\geq 8/c_1$ and $K^3\geq 2^6n^2/(c_1^3|A_i|)$ 
 of Lemma \ref{lem:twopencils} hold if $C$ is chosen sufficiently large.

We may apply the inductive assumption to bound 
\[\I(A_{s+1},\L)<C(2^{-15}m)^{11/15}n^{11/15} = 2^{-11} I.\] 
Thus the subsets $G_1,\dots ,G_s$ contribute at least $\left(1 - 3\cdot 2^{-11}\right)I\geq I/2$ incidences, 
and in particular we have
\begin{equation}\label{eq:IlessIAL}
I \leq  2 \sum_{i=1}^s \I(G_i, \L).
\end{equation}

We now show that each $G_i$ is projectively equivalent to a Cartesian product.
We refer to Richter-Gebert \cite{RG} for an introduction to the projective plane and projective transformations,
and for the following facts.
The affine plane $\FF^2$ can be extended to a projective plane by adding a line $\lambda$ at infinity.
There are two points $\alpha, \beta$ on the line at infinity such that all lines through $\alpha$ (except for $\lambda$) are horizontal lines in the affine plane, and the lines through $\beta$ (except for $\lambda$ are vertical lines in the affine plane.
Projective transformations are those bijections of the projective plane that preserve collinearities and point-line incidences,
and we call two sets projectively equivalent if there is a projective transformation that maps one bijectively to the other.
For any two points $p,q$ there is a projective transformation that sends $p$ and $q$ to $\alpha$ and $\beta$ (see for instance \cite[Theorem 3.4]{RG}).

For each $i$, we let $\tau_i$ be a projective transformation sending $p_i$ and $q_i$ to $\alpha$ and $\beta$.
The preimage of the line at infinity is then the line $\overline{p_iq_i}$,
and from Lemma  \ref{lem:twopencils} we have $G_i\cap \overline{p_iq_i} = \emptyset$,
so $\tau_i$ maps $G_i$ into the affine plane.
Also, if $\overline{p_iq_i}$ happens to be in $\L$, 
then it has no incidences with $G_i$, so we can ignore it when bounding $\I(G_i,\L)$.
The set $H_i=\tau(G_i)\subseteq \FF^2$ is covered by $2^{15}K$ horizontal lines and $2^{15}K$ vertical lines,
so it is contained in a Cartesian product $X_i\times Y_i$ with $|X_i|=|Y_i|\leq 2^{15} K$. 
Since projective transformations preserve incidences, 
we have $\I(H_i,\L)=\I(G_i,\L)$.

We apply Theorem~\ref{thm:cartszemtrot} to bound the incidences on each product $X_i\times Y_i$.
In positive characteristic, the extra condition of Theorem~\ref{thm:cartszemtrot} holds, since the assumption $m^{-2}n^{13} \less p^{15}$ gives
\[|X_i||\L| \less Kn \less m^{-4/15}n^{26/15}\less p^2.\]
Therefore, we can apply Theorem~\ref{thm:cartszemtrot} to obtain 
(letting $c^*$ denote the implicit constant in Theorem~\ref{thm:cartszemtrot})
\begin{equation*}
\I(G_i,\L)
\leq \I(X_i\times Y_i, \L)
\leq c^*(2^{15}K)^{3/4}(2^{15}K)^{1/2}n^{3/4}
< c^*2^{20}K^{5/4}n^{3/4}.
\end{equation*}
Thus, using \eqref{eq:IlessIAL}, we have (recalling that $K = I/m$ and that $s\leq 2^{68}n^2/K^4$)
\[I 
\leq 2 \sum_{i=1}^s \I(G_i, \L)
< 2\cdot 2^{68} \frac{n^2}{K^4}\cdot c^*2^{20}K^{5/4}n^{3/4}
= 2^{89}c^* \frac{m^{11/4}n^{11/4}}{I^{11/4}}
.\]
Solving for $I$ gives  $I< C'm^{11/15}n^{11/15}$, 
for a constant $C'$ that depends only on the constant $c^*$ from Theorem~\ref{thm:cartszemtrot}, 
and not on $C$. 
Hence choosing $C>C'$ gives a contradiction to $I = Cm^{11/15}n^{11/15}$.
This concludes the proof of Theorem~\ref{thm:genszemtrot}.

\section{Applications}\label{sec:apps}

In this section we give a few corollaries of Theorems \ref{thm:genszemtrot} and \ref{thm:cartszemtrot}.
These are meant to give an impression of the possible applications, and we are certain there are more.
None of the proofs in this section are new, 
but we include them here to make it easy for the reader to verify the resulting exponents, as well as the extra condition in positive characteristic.
We briefly introduce each problem, but refer to the relevant papers for a more detailed background.

\subsection{Sum-product-type bounds}\label{sec:appscart}
As a first application, we reproduce the best known sum-product bound, which was first proved by Roche-Newton, Rudnev and Shkredov \cite{RRS}, also using Theorem \ref{thm:rudnev}.
Here we show that it follows from Theorem \ref{thm:cartszemtrot} using the same argument that Elekes \cite{Elekes} used to derive the sum-product bound \eqref{eq:elekes} over $\R$ from the Szemer\'edi-Trotter theorem.

\begin{corollary}\label{cor:sumprod}
Let $A\subset \FF$ be a finite set. 
If $\FF$ has positive characteristic $p$, then assume $|A|\less p^{5/8}$.
Then
\[\max\{|A+A|,|A\cdot A|\} \more |A|^{6/5}. \]
Moreover, if one of $|A+A|, |A\cdot A|$ is $O(|A|)$, 
then the other is $\Omega(|A|^{3/2})$.
\end{corollary}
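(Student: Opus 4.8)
\textbf{Proof proposal for Corollary \ref{cor:sumprod}.} The plan is to run Elekes's classical argument, but with Theorem \ref{thm:cartszemtrot} in place of the Szemer\'edi--Trotter bound. Write $A+A=:S$ and $A\cdot A=:\Pi$, and set $s=|S|$, $\pi=|\Pi|$. The point set will be the Cartesian product $\P=S\times \Pi\subset\FF^2$, so $m=s\pi$. The line set will be $\L=\{y=a(x-b):a,b\in A\}$, which has $n=|A|^2$ lines (distinct because the slope $a$ and the root $b$ can be read off each line). The key observation, exactly as in Elekes, is that for every triple $(a,b,c)\in A^3$ the point $(b+c,\,ac)$ lies on the line $y=a(x-b)$: indeed $a((b+c)-b)=ac$, and $(b+c,ac)\in S\times\Pi$. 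Hence each of the $|A|^2$ lines is incident to at least $|A|$ points of $\P$ (one for each choice of $c$), giving $\I(\P,\L)\ge |A|^3$.

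Now I would apply Theorem \ref{thm:cartszemtrot} to $\P=S\times\Pi$ and $\L$. Take $\{a,b\}=\{s,\pi\}$ with $a\le b$ (so $ab=s\pi$), and check the hypothesis $ab^2\le n^3$: since trivially $s,\pi\le|A|^2=n$, we have $ab^2\le n^3$ automatically. In positive characteristic the extra condition $an\less p^2$ reads $\min(s,\pi)\cdot|A|^2\less p^2$, which follows from $|A|\less p^{5/8}$ once we also use the (at that point not yet established) fact that $\min(s,\pi)\le\max\{s,\pi\}^2$-type bounds are weak; more cleanly, $\min(s,\pi)\le s\pi=m$ and a crude bound $m\le|A|^4$ gives $an\le|A|^6\less p^{15/4}$, which is not quite $p^2$ — so here I would instead note that WLOG $\min(s,\pi)=O(|A|^{6/5})$ (otherwise the conclusion $\max\{s,\pi\}\more|A|^{6/5}$ is immediate), whence $an\less|A|^{6/5}|A|^2=|A|^{16/5}\less p^2$ using $|A|\less p^{5/8}$. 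With the hypotheses verified, Theorem \ref{thm:cartszemtrot} gives
\[
|A|^3\le \I(\P,\L)\less a^{3/4}b^{1/2}n^{3/4}+n\less (s\pi)^{3/4}\max\{s,\pi\}^{-1/4}\,|A|^{3/2}+|A|^2 .
\]
Bounding $\max\{s,\pi\}\ge|A|$ (since $|A+A|,|A\cdot A|\ge|A|$) turns the main term into $(s\pi)^{3/4}|A|^{5/4}$, and the $n=|A|^2$ term is dominated by $|A|^3$, so we get $|A|^3\less (s\pi)^{3/4}|A|^{5/4}$, i.e. $s\pi\more|A|^{7/3}$, hence $\max\{s,\pi\}\more|A|^{7/6}$ — which is \emph{stronger} than claimed. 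Actually the weaker but cleaner route is to simply use $\max\{s,\pi\}^{1/2}\ge \min\{s,\pi\}^{1/2}$ only when convenient; to land exactly on $|A|^{6/5}$, note that if $\max\{s,\pi\}\le|A|^{6/5}$ then $s\pi\le|A|^{12/5}$, and plugging $a^{3/4}b^{1/2}\le(s\pi)^{3/4}\cdot\min\{s,\pi\}^{-1/4}\le(s\pi)^{3/4}|A|^{-1/4}$ into the inequality yields $|A|^3\less|A|^{12/5\cdot3/4}|A|^{-1/4}|A|^{3/2}=|A|^{3.2}$, a contradiction-free inequality — so one instead optimizes the split and uses the asymmetry of Theorem \ref{thm:cartszemtrot} to extract precisely $\more|A|^{6/5}$. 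For the ``moreover'' clause, suppose $|A\cdot A|=O(|A|)$; then $\pi=O(|A|)$, so in $\L$ most lines pass through few points — instead I would dualize: if $|A\cdot A|\less|A|$, apply the incidence bound with the roles arranged so that $b=|A+A|=:s$ is the large side and $a=\pi\less|A|$, giving $|A|^3\less \pi^{3/4}s^{1/2}|A|^{3/2}\less|A|^{3/4}s^{1/2}|A|^{3/2}$, hence $s\more|A|^{3/2}$; symmetrically if $|A+A|\less|A|$.

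\textbf{Main obstacle.} The delicate point is managing the asymmetry in Theorem \ref{thm:cartszemtrot} — the bound $a^{3/4}b^{1/2}n^{3/4}$ is \emph{not} symmetric in $a,b$, so one must decide which of $|A+A|,|A\cdot A|$ plays the role of the short side $a$, and the clean way to do this is a WLOG reduction: assume for contradiction that \emph{both} $|A+A|$ and $|A\cdot A|$ are $o(|A|^{6/5})$, and then both the product $s\pi$ and each factor are small enough that the incidence upper bound falls below $|A|^3$. The arithmetic-geometric bookkeeping (checking that the $+n$ term is negligible, and that the positive-characteristic hypothesis $|A|\less p^{5/8}$ is exactly what is needed for $an\less p^2$ after the WLOG reduction) is routine but must be done carefully to get the stated exponent $5/8$ rather than something weaker. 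The ``moreover'' statement is then a degenerate case of the same computation where one factor is forced down to $\Theta(|A|)$.
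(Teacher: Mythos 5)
Your setup is exactly the paper's: Elekes's point set $(A+A)\times(A\cdot A)$, the lines $y=a(x-b)$ with $(a,b)\in A\times A$, the incidence lower bound $|A|^3$, and the same WLOG reduction (either $\min\{s,\pi\}\more|A|^{6/5}$ and we are done, or $\min\{s,\pi\}\cdot|A|^2\less|A|^{16/5}\less p^2$) to verify the positive-characteristic hypothesis of Theorem \ref{thm:cartszemtrot}. The gap is that you never actually extract the exponent $6/5$. Write $M=\max\{s,\pi\}$ and $\mu=\min\{s,\pi\}$; Theorem \ref{thm:cartszemtrot} gives $|A|^3\less\mu^{3/4}M^{1/2}|A|^{3/2}$. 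Your first attempt bounds $\mu^{3/4}M^{1/2}=(s\pi)^{3/4}M^{-1/4}\le(s\pi)^{3/4}|A|^{-1/4}$ and deduces $M\more|A|^{7/6}$, which you call ``stronger than claimed'' --- but $7/6<6/5$, so it is strictly weaker; replacing $M^{-1/4}$ by $|A|^{-1/4}$ discards exactly the leverage you need. Your second attempt (assume $M\le|A|^{6/5}$ and seek a contradiction) ends, as you concede, in the harmless inequality $|A|^3\less|A|^{3.05}$ (not $|A|^{3.2}$, but either way no contradiction), and the closing phrase ``one instead optimizes the split'' is not an argument.

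The missing step is one line: since $\mu\le M$, we have $\mu^{3/4}M^{1/2}\le M^{5/4}$, hence $M^{5/4}\more|A|^{3/2}$ and $M\more|A|^{6/5}$. Equivalently, the paper records the conclusion of the incidence bound in the symmetric form $\mu^{3}M^{2}\more|A|^{6}$, from which both claims fall out at once: $\mu\le M$ gives $M^{5}\more|A|^{6}$, and $\mu=O(|A|)$ gives $M^{2}\more|A|^{3}$. Your separate treatment of the ``moreover'' clause is essentially correct, modulo justifying that the $O(|A|)$-sized set really is the short side $a$ in Theorem \ref{thm:cartszemtrot} (which holds because both sets being $O(|A|)$ would already force $|A|^3\less|A|^{3/4}|A|^{1/2}|A|^{3/2}=|A|^{11/4}$, a contradiction), but it is subsumed by the single inequality $\mu^{3}M^{2}\more|A|^{6}$.
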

\begin{proof}
Set $M_{max}:=\max\{|A+A|,|A\cdot A|\}$ and $M_{min}:=\min\{|A+A|,|A\cdot A|\}$.
Define a point set and line set by
\[\P:= (A+A)\times (A\cdot A),~~~~
\L := \{y = a'(x-a): (a,a')\in A\times A\}. \]

If $\FF$ has positive characteristic, we need to verify the condition $M_{min}|A|^2\less p^2$ of Theorem \ref{thm:cartszemtrot}.
Either $M_{min}\more |A|^{6/5}$, and we are done,
or $M_{min}\less |A|^{6/5}$, so that $M_{min}|A|^2\less p^2$ follows from the assumption $|A|\less p^{5/8}$.
The other condition of Theorem \ref{thm:cartszemtrot} is that $M_{min}M_{max}^2 \leq |\L|^3 = |A|^6$;
if this failed, it would imply $M_{max}\geq |A|^2$ and we would be done.

The line $y = a'(x-a)$ contains the point $(a''+a, a'a'')$ for any choice of $a''\in A$,
so each of the $|A|^2$ lines gives at least $|A|$ incidences.
Applying Theorem \ref{thm:cartszemtrot} gives
\[|A|^3 \leq I(\P,\L)\less M_{min}^{3/4}M_{max}^{1/2}|A|^{6/4},
\]
so
\begin{equation}\label{eq:minmax}
 M_{min}^{3}M_{max}^2 \more |A|^6,
\end{equation}
which implies the two statements in the corollary.
\end{proof}

The inequality $|A+A|^2|A\cdot A|^3\more |A|^6$ was obtained in \cite{RRS} with the condition $|A|\less p^{5/8}$, and $|A+A|^3|A\cdot A|^2\more |A|^6$ was obtained in \cite{AMRS} with the condition $|A|\less p^{3/5}$.
Equation \eqref{eq:minmax} combines both these inequalities, and improves the condition for the second one.

As a second application, we prove a lower bound on the size of the set $A\cdot (A+1)$,
a question raised by Bourgain \cite{Bourgain05}.
Our argument is again in the style of Elekes, 
and it was used over $\R$ by Garaev and Shen \cite{GS}.
Our bound is a slight improvement on a result of Rudnev, Shkredov and Stevens \cite{RSS}
(also based on Theorem \ref{thm:rudnev}), 
who proved the same bound up to logarithms.

\begin{corollary}\label{cor:aaplusone}
Let $A\subset \FF$ be a finite set.
If $\FF$ has positive characteristic $p$, then assume $|A|\less p^{5/8}$.
Then
\[|A\cdot (A+1)| \more |A|^{6/5}. \]
\end{corollary}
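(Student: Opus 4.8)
The plan is to mimic the Elekes-style argument used in the proof of Corollary \ref{cor:sumprod}, but with a point set and line set tailored to the product $A\cdot(A+1)$. Set $P:=|A\cdot(A+1)|$ and, for brevity, write $S:=|A+1|=|A|$ (the shift does not change the cardinality). I would define
\[
\P := (A+1)\times \bigl(A\cdot(A+1)\bigr), \qquad
\L := \{\, y = a'(x - 1 + a) \ : \ (a,a')\in A\times A \,\},
\]
or some close variant, chosen so that each of the $|A|^2$ lines passes through at least $|A|$ points of $\P$. Concretely, one wants a parametrisation in which fixing two elements of $A$ determines a line, and a third element of $A$ traces out the incidences: the line corresponding to $(a,a')$ should contain the points $\bigl((a''+1),\, a'(a''+a)\bigr)$ for every $a''\in A$, and these points should lie in $(A+1)\times (A\cdot(A+1))$. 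Getting the algebra of this parametrisation exactly right — so that the first coordinate lands in $A+1$ and the second in $A\cdot(A+1)$ — is the one place that needs genuine care, and is where I would spend most of the effort.

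Granting such a configuration, the incidence count is bounded below by $|A|^3 \le \I(\P,\L)$, since there are $|A|^2$ lines each carrying $\ge |A|$ incidences. On the other side, Theorem \ref{thm:cartszemtrot} applies with $a = b = $ the relevant side lengths: one side of the product is $A+1$ of size $|A|$, the other is $A\cdot(A+1)$ of size $P$; since the bound in Theorem \ref{thm:cartszemtrot} is stated with $a\le b$, I would take $a = \min\{|A|, P\}$ and $b = \max\{|A|, P\}$, and $n = |A|^2$. If $P \ge |A|^2$ we are immediately done (the target $P\more|A|^{6/5}$ is far weaker), so we may assume $P < |A|^2$, hence $a = |A|$, $b = P$, and the size condition $ab^2 \le n^3$ reads $|A|P^2 \le |A|^6$, i.e.\ $P \le |A|^{5/2}$, which again we may assume. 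Theorem \ref{thm:cartszemtrot} then gives
\[
|A|^3 \ \le\ \I(\P,\L)\ \less\ |A|^{3/4} P^{1/2} (|A|^2)^{3/4} + |A|^2
\ =\ |A|^{9/4} P^{1/2} + |A|^2 .
\]
Since $|A|^3$ dominates $|A|^2$ for $|A|$ large, the main term must dominate, so $|A|^3 \less |A|^{9/4}P^{1/2}$, giving $P^{1/2} \more |A|^{3/4}$, i.e.\ $P = |A\cdot(A+1)| \more |A|^{3/2}$ — in fact even stronger than the claimed $|A|^{6/5}$. (This matches the phenomenon that these Elekes-type arguments often overshoot for a single product; one states the weaker $|A|^{6/5}$ presumably for uniformity with the symmetric sum-product statement, or the configuration forces a worse exponent than my naive count suggests.)

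The positive-characteristic bookkeeping is routine: the condition of Theorem \ref{thm:cartszemtrot} is $an \less p^2$, i.e.\ $|A|\cdot|A|^2 = |A|^3 \less p^2$, which follows from the hypothesis $|A|\less p^{5/8}$ (since $|A|^3 \less p^{15/8} \less p^2$), exactly as in Corollary \ref{cor:sumprod}; and the dichotomy for the size conditions ($P\ge|A|^2$ or $P\le|A|^{5/2}$) is handled by the same "if the condition fails we are already done" move used there. The main obstacle, to reiterate, is purely in setting up the line family: the product $A\cdot(A+1)$ is genuinely asymmetric (unlike $A+A$ and $A\cdot A$, which enter symmetrically in Corollary \ref{cor:sumprod}), so one must be a little clever to arrange that both coordinates of every incident point lie in the intended one-dimensional sets, and I would want to double-check, once the parametrisation is fixed, that no spurious collapse of the line set occurs (i.e.\ that $|\L|\approx|A|^2$ and not something smaller).
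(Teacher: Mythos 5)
There is a genuine gap, and it sits exactly where you said you would ``spend most of the effort'': no valid parametrisation of the form you propose exists for the asymmetric point set $(A+1)\times\bigl(A\cdot(A+1)\bigr)$. Your candidate line $y=a'(x-1+a)$ contains the points $\bigl(a''+1,\,a'(a''+a)\bigr)$, whose second coordinate lies in $A\cdot(A+A)$, not in $A\cdot(A+1)$, so these are not incidences with your $\P$. More structurally: if the first coordinate of the traced points is to lie in $A+1$ for all choices of parameters, it must essentially be $a''+1$, and then affineness in $a''$ forces the second coordinate to be $\alpha(a,a')(a''+1)+\beta(a,a')$; landing this in $A\cdot(A+1)$ for all $a''$ essentially forces $\beta=0$ and $\alpha\in A$, which kills the dependence on $a'$ and leaves only $|A|$ distinct lines. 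The strength of your conclusion is itself the warning sign: a configuration of $|A|^2$ lines each meeting $(A+1)\times(A\cdot(A+1))$ in $|A|$ points would yield $|A\cdot(A+1)|\more|A|^{3/2}$, far beyond the state of the art, and your own remark that the argument ``overshoots'' is precisely the symptom of the nonexistent configuration.

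The repair is to give up the asymmetry and take the square product, as the paper does: set
\[
\P:=\bigl(A\cdot(A+1)\bigr)\times\bigl(A\cdot(A+1)\bigr),\qquad
\L:=\bigl\{\,y=a\cdot\bigl(x/(a'+1)+1\bigr)\;:\;(a,a')\in A\times A\,\bigr\},
\]
so that the line indexed by $(a,a')$ contains the points $\bigl(a''(a'+1),\,a(a''+1)\bigr)$ for every $a''\in A$; both coordinates now genuinely lie in $A\cdot(A+1)$. With $a=b=P:=|A\cdot(A+1)|$ and $n=|A|^2$, Theorem~\ref{thm:cartszemtrot} gives $|A|^3\le\I(\P,\L)\less P^{3/4}P^{1/2}|A|^{3/2}$, hence $P\more|A|^{6/5}$ --- the paper's exponent, which is exactly what the square product costs you relative to your (unattainable) $|A|^{3/2}$. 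Note also that the characteristic-$p$ condition then reads $P\cdot|A|^2\less p^2$ rather than your $|A|^3\less p^2$; it is verified via the same dichotomy as in Corollary~\ref{cor:sumprod} (either $P\more|A|^{6/5}$ and you are done, or $P\less|A|^{6/5}$ and $P|A|^2\less|A|^{16/5}\less p^2$ by the hypothesis $|A|\less p^{5/8}$), which is why the exponent $5/8$ appears in the hypothesis. The rest of your calculation scheme (lower bound $|A|^3$ on incidences, the ``if the size condition fails we are already done'' moves) is the right one and matches the paper once the configuration is corrected.
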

\begin{proof}
Define a point set and line set by
\[\P:= (A\cdot (A+1))\times (A\cdot (A+1)),
\L := \{y = a\cdot (x/(a'+1) +1 ): (a,a')\in A\times A\}. \]
Applying Theorem \ref{thm:cartszemtrot} gives the same calculation as in the proof of Corollary \ref{cor:sumprod},
with $M_{min}$ and $M_{max}$ replaced by $|A\cdot (A+1)|$.
\end{proof}

As another application of this kind, we consider the sets $A+BC$ and $A(B+C)$ for finite sets $A,B,C\subset \FF$.
Barak, Impagliazzo and Wigderson \cite{BIW} used \cite{BKT} to prove that there is an $\eps>0$ such that $|A+AA|\more |A|^{1+\eps}$ for every $A\subset \F$ with $|A|<p^{0.99}$.
Roche-Newton, Rudnev and Shkredov \cite{RRS} proved the bound 
\[|A+BC|\more \min\{(|A||B||C|)^{1/2}, M^{-1}|A||B||C|,p\}\]
 for $A,B,C\subset \F$, where $M = \max\{|A|,|B|,|C|\}$.
Aksoy Yazici et al. \cite{AMRS}  proved the same bound for $A(B+C)$.
Here we reprove both bounds, 
and we refine them somewhat by showing that the second term can be omitted as long as none of the sets is $\{0\}$ (if, say, $B=\{0\}$, then $|A+BC|\more (|A||B||C|)^{1/2}$ could not be true for large $C$ and small $A$).

\begin{corollary}\label{cor:aplusbc}
Let $A,B,C\subset \FF$ be finite sets, none of which equals $\{0\}$.
If $\FF$ has positive characteristic $p$, assume $|A||B||C|\less p^2$.
Then
\[|A+BC| \more (|A||B||C|)^{1/2} \hspace{10pt}\text{and}\hspace{10pt}|A(B+C)| \more (|A||B||C|)^{1/2}. \]
\end{corollary}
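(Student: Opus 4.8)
\textbf{Proof plan for Corollary \ref{cor:aplusbc}.}
The plan is to run Elekes's incidence trick, exactly as in the proofs of Corollary \ref{cor:sumprod} and Corollary \ref{cor:aaplusone}, but now setting up the Cartesian product so that it records the interaction $a + bc$ (respectively $a(b+c)$). For the first bound, I would put $\P := A \times (A + BC)$ and take the line set
\[
\L := \{ y = cx + a : (a,c) \in A \times C \}.
\]
Wait --- that uses only $A$ and $C$; instead I want a line set of size roughly $|A||C|$ (or $|B||C|$) whose lines are forced to be rich. The cleaner choice is $\P := B \times (A + BC)$ and, for each pair $(a,c) \in A \times C$, the line $y = cx + a$. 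This line passes through $(b, a + bc)$ for every $b \in B$, so it contains at least $|B|$ points of $\P$, giving $\I(\P,\L) \geq |A||B||C|$. The lines are distinct as $(a,c)$ ranges over $A\times C$ (distinct slopes for distinct $c$, distinct intercepts for distinct $a$), so $|\L| = |A||C|$ and $|\P| = |B| \cdot |A+BC|$.

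Next I apply Theorem \ref{thm:cartszemtrot} with $\{|B|, |A+BC|\}$ in the roles of $\{a,b\}$ and $n = |A||C|$. After checking the size hypotheses (see below), the theorem gives
\[
|A||B||C| \leq \I(\P,\L) \less (\text{smaller of }|B|, |A+BC|)^{3/4}(\text{larger})^{1/2}(|A||C|)^{3/4} + |A||C|.
\]
The term $|A||C|$ is dominated unless $|B|$ is tiny, which one can rule out or absorb into the constant. From the main term, since trivially $|A+BC| \geq \max\{|A|,|B||C|/\text{stuff}\}$ --- more usefully $|A+BC| \geq |A|$ always and $|A+BC| \geq |C|$ when $B \neq \{0\}$ --- a short manipulation isolates $|A+BC| \more (|A||B||C|)^{1/2}$. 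By symmetry in the setup (swapping the roles of $B$ and $C$, or of $A$ and the product) one gets the same bound with the hypothesis that none of the three sets is $\{0\}$, which is exactly where that hypothesis enters: it guarantees the trivial lower bounds $|A+BC| \geq \max\{|A|, |B|, |C|\}$ needed to convert the product inequality into the stated square-root bound. The bound for $|A(B+C)|$ is entirely parallel, using lines of the form $y = a(x/\text{something})$ in the style of Corollary \ref{cor:aaplusone}, i.e. $\P := (A(B+C)) \times (A(B+C))$ is wasteful; better is $\P := (B+C) \times (A(B+C))$ with lines $y = a'(x + \text{shift})$ --- one parametrizes so that each line collects $|A|$ or $|B+C|$ honest incidences.

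The main obstacle, and the part requiring the most care, is verifying the two size conditions of Theorem \ref{thm:cartszemtrot}: first the product condition $ab^2 \leq n^3$, i.e. (ordering so $a \leq b$) something like $|B| \cdot |A+BC|^2 \leq (|A||C|)^3$, and second the characteristic condition $an \less p^2$, i.e. $\min\{|B|,|A+BC|\} \cdot |A||C| \less p^2$. For the product condition, if it \emph{failed} then $|A+BC|$ would be forced to be very large --- larger than $(|A||C|)^{3/2}/|B|^{1/2}$, comfortably above $(|A||B||C|)^{1/2}$ when the sets are comparable --- so one argues by cases exactly as in Corollary \ref{cor:sumprod}: either the condition holds and the incidence bound applies, or it fails and the conclusion is immediate. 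For the characteristic condition, note $\min\{|B|,|A+BC|\} \leq |B| \leq \max$, and $|A||B||C| \less p^2$ is precisely the hypothesis, so $an \leq |B||A||C| = |A||B||C| \less p^2$ holds directly --- no case analysis needed there. I would organize the write-up as: set up $\P,\L$; dispose of the two conditions by the case split; apply Theorem \ref{thm:cartszemtrot}; and extract the bound using the three trivial lower bounds on $|A+BC|$, invoking the ``none equals $\{0\}$'' hypothesis exactly once for that last step.
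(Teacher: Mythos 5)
Your overall strategy is exactly the paper's: an Elekes-style configuration whose incidences count $|A||B||C|$ triples, fed into Theorem \ref{thm:cartszemtrot}, with the two hypotheses of that theorem disposed of by a case split. For the first inequality your setup ($\P=B\times(A+BC)$, lines $y=cx+a$ indexed by $A\times C$, so $n=|A||C|$) is a legitimate variant of the paper's ($\P=C\times(A+BC)$, lines $y=a+bx$ indexed by $A\times B$, after ordering $|B|\geq|C|$), and the characteristic condition is checked the same way. One point you leave too loose: when the condition $ab^2\leq n^3$ fails in your parametrization you get $|A+BC|^2>(|A||C|)^3/|B|$, and this exceeds $|A||B||C|$ only when $|B|\leq|A||C|$ --- ``when the sets are comparable'' is not enough, since the corollary must hold for all sizes. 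The missing case $|B|>|A||C|$ does close via the trivial bound $|A+BC|\geq|B|$ (valid since $C\neq\{0\}$), which you already have in hand, so this is a fixable omission rather than a dead end; the paper avoids it entirely by putting the smaller of $B,C$ on the $x$-axis.

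The second inequality is where your sketch genuinely breaks. With $\P=(B+C)\times(A(B+C))$ and lines of the form $y=a(x+\text{shift})$ you only have a one-parameter family of $|A|$ lines, each carrying $|B+C|$ incidences, so $\I(\P,\L)\approx|A||B+C|$; running Theorem \ref{thm:cartszemtrot} on that yields only $|A(B+C)|\more|A|^{1/2}|B+C|^{1/2}$, which is weaker than $(|A||B||C|)^{1/2}$ whenever $|B+C|$ is much smaller than $|B||C|$. The correct parallel (and what the paper does) is to keep $\P=C\times(A(B+C))$ and take the two-parameter family $\L=\{y=a(b+x):(a,b)\in A\times B\}$, so that each of the $|A||B|$ lines meets $\P$ in the $|C|$ points $(c,a(b+c))$; one must also delete $0$ from $A$ first so that these lines are pairwise distinct (all pairs $(0,b)$ give the same line $y=0$). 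With that configuration the computation is literally the one from the first part.
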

\begin{proof}
Note that we can assume $|B|\geq |C|$ by interchanging $B$ and $C$ if necessary.
Define a point set and line set by
\[\P:= C\times (A+BC),~~~~
\L := \{y = a+bx: (a,b)\in A\times B\}. \]
Each of the $|A||B|$ lines of $\L$ contains exactly $|C|$ points of $\P$, so there are $|A||B||C|$ incidences between $\P$ and $\L$.

In positive characteristic $p$, 
the condition $\min\{|C|,|A+BC|\}\cdot |\L|\less p^2$ of Theorem \ref{thm:cartszemtrot} holds because of the assumption $|A||B||C|\less p^2$.
The other condition of Theorem \ref{thm:cartszemtrot} is that $|C||A+BC|^2\leq (|A||B|)^3$,
which we may assume,
since otherwise we directly obtain
$|A+BC|^2> (|A||B|)^3|C|^{-1} \geq |A||B||C|$ using $|B|\geq |C|$.
Thus we can apply Theorem \ref{thm:cartszemtrot} to get
\[|A||B||C| =\I(\P,\L)
\less |C|^{3/4}|A+BC|^{1/2}(|A||B|)^{3/4}+|A||B|.\]
If the first term dominates, 
rearranging gives the first inequality of the corollary.
If the second term dominates,
we have $|C|=O(1)$.
Since $C\neq \{0\}$, we can pick a nonzero $c\in C$,
and observe that $|A+cB|\geq \max\{|A|,|B|\} \more (|A||B||C|)^{1/2}$.
This finishes the proof of the first inequality.

For the second inequality, we first remove $0$ from $A$, which does not affect the asymptotic behaviour (given that $A\neq \{0\}$).
Then we define
\[\P:= C\times (A(B+C)),~~~~
\L := \{y = a(b+x): (a,b)\in A\times B\}, \]
noting that the lines are distinct because $0\not\in A$.
The remaining calculation mirrors that of the first part.
\end{proof}

Remarkably, these bounds on $|A+BC|$ and $|A(B+C)|$ match the best known bounds over $\R$, obtained using the Szemer\'edi-Trotter Theorem
(see \cite[Exercise 8.3.3]{TV} or \cite[p. 287]{AS}).
When $A=B=C\subset \R$, 
Murphy et al. \cite{MRNS} managed to prove $|A(A+A)| \more |A|^{3/2+c}$ for a small $c>0$ 
(later improved by Roche-Newton \cite{RocheNewton}).

\subsection{An expanding polynomial}\label{sec:exppoly}
As another application of Theorem \ref{thm:cartszemtrot}, we prove an explicit expansion bound for the polynomial $f(x,y) = x^2+xy$.
This problem was first considered by Bourgain \cite{Bourgain05}, 
who used the result of \cite{BKT} to prove the following.
For all $0<\eps<1$ there is $\delta>0$ such that if $A,B\subset \F$ have size $|A|=|B|=N\approx p^\eps$, 
then $|f(A,B)| \more N^{1+\delta}$, 
where $f(A,B):=\{f(a,b):(a,b)\in A\times B\}$.
In other words, $f$ is a \emph{two-variable expander}.
Corollary \ref{cor:aplusbc} shows that $x+yz$ and $x(y+z)$ are three-variable expanders, but establishing expansion for two-variable polynomials appears to be harder.
Note that, in spite of Corollary \ref{cor:aaplusone},
$g(x,y)=xy+x$ is not an expander in this sense, 
because for distinct sets $A,B$ of size $N$ we can have $|g(A,B)|\less N$.

As far as we know, no explicit exponents have been published for Bourgain's problem over finite fields.
Hegyv\'ari and Hennecart \cite{HH} and Shen \cite{Shen} generalised Bourgain's bound to polynomials of a similar form.
Over $\R$, it is known that $|f(A,B)|\more N^{4/3}$ for $A, B\subset \R$ with $|A|=|B|=N$, 
and there is a general theory of which polynomials are expanders (see Raz, Sharir, and Solymosi \cite{RazSS}).
We prove the explicit expansion bound $|f(A,B)|\more N^{5/4}$ over any field,
using a proof similar to that of \cite[Theorem 4]{HH}.

\begin{corollary}
Consider the polynomial $f(x,y) = x^2+xy$, and finite sets $A,B\subset \FF$ with $A\neq \{0\}$. If $\FF$ has positive characteristic $p$, then assume $|A|^2|B|\less p^2$.
Then
\[|f(A,B)| \more \min\{|A|^{1/2}|B|^{3/4}, |B|^2\}. \]
\end{corollary}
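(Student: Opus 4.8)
The plan is to run an Elekes-type argument through a multiplicative-energy estimate, reducing the desired lower bound on $|f(A,B)|$ to one application of Theorem~\ref{thm:cartszemtrot} on the Cartesian product $B\times B$. Before that, two harmless reductions. We may assume $0\notin A$: if $0\in A$ we replace $A$ by $A\setminus\{0\}$, which is nonempty since $A\neq\{0\}$ and changes $|A|$ by at most one; and if $|A|=O(1)$ we are already done, since evaluating $f$ at a single nonzero element of $A$ gives $|f(A,B)|\geq|B|\more|B|^{3/4}\more|A|^{1/2}|B|^{3/4}$. Likewise we may assume $|B|<\tfrac14|A|^2$, because in the complementary range the same trivial bound $|f(A,B)|\geq|B|$ already exceeds a constant multiple of $|A|^{1/2}|B|^{3/4}$ and hence of the claimed minimum.

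Write $N:=|f(A,B)|$ and let $F:=f(A,B)$ denote the image. For $c\in F$ set $r(c):=|\{(a,b)\in A\times B:a^2+ab=c\}|$. Since $\sum_{c\in F}r(c)=|A||B|$, the Cauchy--Schwarz inequality gives
\[
E:=\sum_{c\in F}r(c)^2=\bigl|\{(a,b,a',b')\in(A\times B)^2:a^2+ab=a'^2+a'b'\}\bigr|\geq\frac{|A|^2|B|^2}{N}.
\]
The key step is to reinterpret $E$ as a point--line incidence count to which Theorem~\ref{thm:cartszemtrot} applies. Rearranging the defining equation as $ab-a'b'=a'^2-a^2$, for each fixed pair $(a,a')\in A\times A$ the solutions $(b,b')$ lie on a line $\ell_{a,a'}$ in the $(b,b')$-plane (a genuine line because $a,a'\neq0$), and $E=\sum_{(a,a')\in A\times A}|\ell_{a,a'}\cap(B\times B)|$. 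Comparing coefficient vectors shows that $(a,a')\mapsto\ell_{a,a'}$ is injective off the ``diagonals'' $a'=\pm a$, while the at most $2|A|$ diagonal pairs collapse onto one or two lines, each meeting $B\times B$ in at most $|B|$ points. Hence $E\leq\I(B\times B,\L')+2|A||B|$, where $\L':=\{\ell_{a,a'}:a'\neq\pm a\}$ is a set of distinct lines with $|A|^2-2|A|\leq|\L'|\leq|A|^2$.

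Now apply Theorem~\ref{thm:cartszemtrot} with the product $B\times B$ and the line set $\L'$. Its hypotheses hold: the size condition $|B|\leq|\L'|$ follows from the reduction $|B|<\tfrac14|A|^2\leq|\L'|$ (the remaining small-$|A|$ cases being trivial as above), and in positive characteristic the condition $|B|\cdot|\L'|\less p^2$ follows from the assumed $|A|^2|B|\less p^2$. This yields $\I(B\times B,\L')\less|B|^{5/4}|\L'|^{3/4}+|\L'|\less|B|^{5/4}|A|^{3/2}+|A|^2$, so that
\[
\frac{|A|^2|B|^2}{N}\less|B|^{5/4}|A|^{3/2}+|A|^2+|A||B|.
\]
Solving for $N$ gives $N\more\min\{|A|^{1/2}|B|^{3/4},\,|B|^2,\,|A||B|\}$, and since $|A||B|\geq|A|^{1/2}|B|^{3/4}$ the third term is redundant, which is exactly the stated bound.

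The only genuinely delicate part is the bookkeeping in the incidence reinterpretation of $E$: one must be sure the degenerate lines $\ell_{a,a'}$ contribute negligibly (this is also why removing $0$ from $A$ is convenient, so that no coefficient vanishes), and one must confirm that $|\L'|$ is comparable to $|A|^2$ so that the hypothesis $ab^2\leq n^3$ of Theorem~\ref{thm:cartszemtrot} is met in the relevant range. Once this is arranged, the rest is the routine Cauchy--Schwarz computation above.
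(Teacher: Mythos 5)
Your proposal is correct and follows essentially the same route as the paper: Cauchy--Schwarz on the representation function to reduce to the energy $E$, reinterpretation of $E$ as incidences between $B\times B$ and the lines $ax-a'y=(a')^2-a^2$ with the pairs $a'=\pm a$ excluded, and one application of Theorem~\ref{thm:cartszemtrot}. Your handling of the degenerate pairs and of the hypothesis $|B|\leq|\L'|$ is, if anything, slightly more explicit than the paper's.
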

\begin{proof}
Define
\[E := \{(a,b,a',b')\in A\times B\times A\times B : f(a,b) = f(a',b')\}. \]
By Lemma \ref{lem:cauchyschwarz}
(with $X = A\times B$, $Y = f(A,B)$, $Z=\FF$ and $\varphi(a,b) = f(a,b)$)
 we get
\begin{equation}\label{eq:csbound2}
|A||B| \leq |f(A,B)|^{1/2}|E|^{1/2}. 
\end{equation}

On the other hand, we can bound $|E|$ using Theorem \ref{thm:cartszemtrot}, 
by viewing a solution of the equation $a^2+ab = (a')^2 + a'b'$ as an incidence between the point $(b,b')$ and the line $a^2+ax = (a')^2+a'y$. 
Define a point set and line set by
\[\P := B\times B, ~~~~~~\L := \left\{
ax -a'y = (a')^2-a^2
: (a,a')\in A\times A, a\neq\pm a'\right\}. \]
The lines in $\L$ are distinct (because of the restriction $a\neq\pm a'$), and $\I(\P,\L)\geq |E|/2$.

The condition $|B||\L|\less p^2$ of Theorem \ref{thm:cartszemtrot} follows directly from the assumption $|A|^2|B|\less p^2$. 
The other condition of Theorem \ref{thm:cartszemtrot} is that $|B|^3\leq |\L|^3=|A|^6$.
If this fails, then $|B|>|A|^2$ gives $|f(A,B)| \geq |B| > |A|^{1/2}|B|^{3/4}$, where the first inequality is obtained by considering the values of $f(x,y)$ with $x$ any fixed nonzero element of $A$ (using the assumption $A\neq \{0\}$).
So we can apply Theorem \ref{thm:cartszemtrot} to get
\[ |E| \leq 2\I(\P,\L)\less |B|^{3/4}|B|^{1/2}(|A|^2)^{3/4} +|A|^2 = |A|^{3/2}|B|^{5/4}+|A|^2.\]
Together with \eqref{eq:csbound2} this gives the bound in the corollary.
\end{proof}

\subsection{Distinct distances}
As mentioned in Section \ref{sec:intro}, Guth and Katz \cite{GK} solved the distinct distance problem in $\R^2$ (up to a logarithmic factor). 
Write $d(q,r) = (q_x-r_x)^2 + (q_y-r_y)^2$ for the squared Euclidean distance between two points $q$ and $r$, 
and write $\Delta(\P) = \{d(q,r): q,r\in \P\}$ for the set of distances determined by $\P$.
Guth and Katz proved that for $\P\subset \R^2$ we have $|\Delta(\P)| \more |\P|/\log|\P|$.

A related problem is the `pinned distance' problem, which asks for the existence of a point from which many distinct distances occur.
We write $\Delta_q(\P) = \{d(q,r): r\in \P\}$ for the set of distances `pinned' at $q$.
The approach of \cite{GK} does not apply to this variant, and the best known bound is due to Katz and Tardos \cite{KT}, 
who proved that for any $\P\subset \R^2$, there is a $q\in \P$ such that $|\Delta_q(\P)| \more |\P|^{0.86}$.

The finite field version of this problem was first considered by Bourgain, Katz and Tao \cite{BKT}, 
who proved that for $\P\subset \F^2$, with $|\P|= p^\alpha$ and $0<\alpha<2$, there is a $q\in \P$ such that $|\Delta_q(P)|\more |\P|^{1/2+\eps}$ for some $\eps = \eps(\alpha)>0$. 
For large $\alpha$, 
explicit versions of this statement are known,
with the current best due to Hanson, Lund, and Roche-Newton \cite{HLR},
who proved that $|\P|\geq p^{4/3}$ implies that there is a $q\in \P$ with $|\Delta_q(\P)| \more p$.
As far as we know, for $\alpha <4/3$ no explicit values have been published for $\eps$ in the statement of Bourgain, Katz and Tao.
Here we prove that for $\alpha \leq 15/11$ we can take $\eps = 1/30$ (but we note that for $\alpha \geq 4/3$ this is weaker than the bound of \cite{HLR}).
Our proof is essentially that of \cite{BKT}, but we take some more care to deal with the case where $-1$ is a square in $\FF$.

To avoid degeneracies, \cite{BKT} proved their theorem only for finite fields in which $-1$ is not a square\footnote{This is not stated in the journal version of \cite{BKT}, but it is mentioned in Section  7 of the later version {\tt arXiv:math/0301343v3}.}. Indeed, if $-1$ is a square in $\FF$, then the plane $\FF^2$ has \emph{isotropic lines}.
The defining property of an isotropic line is that the distance between any two points on the line is zero. 
Explicitly, for $r=(r_x,r_y)\in \FF^2$,
there are two isotropic lines passing through $r$,
defined via the equations $(y-r_y)=\pm \imath\cdot  (x-r_y)$, where $\imath^2=-1$. 
We use $\lambda_r$ and $\mu_r$ to denote the isotropic lines of $r$.
A point set $\P$ contained in an isotropic line has $\Delta(\P) = \{0\}$;
we exclude this case explicitly.

\begin{corollary}\label{cor:distances}
Let $\P$ be a set of $m$ points in $\FF^2$.
If $\FF$ has positive characteristic, assume 
$m\less p^{15/11}$.
If $-1$ is a square in $\FF$, assume that $\Delta(\P) \neq \{0\}$.
Then there exists a point $q\in \P$ such that
\[|\Delta_q(\P)| \more m^{8/15}. \]
In particular, we have $|\Delta(\P)| \more m^{8/15}$.
\end{corollary}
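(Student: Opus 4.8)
The plan is to run the Elekes-style incidence argument of Bourgain, Katz and Tao, but feeding in our stronger point-line bound from Theorem \ref{thm:genszemtrot} rather than the bound of \cite{BKT}. First I would set up the incidence configuration: take the point set to be $\P$ itself (or rather $\P\times\{0\}$-type copies), and for each ordered pair $(q,r)\in\P\times\P$ consider the locus of points at squared distance $d(q,r)$ from $q$, i.e.\ the ``circle'' $\{z\in\FF^2: d(q,z)=d(q,r)\}$. More precisely, the classical trick is: fix a point $q$; the circles centred at the points of $\P$ passing through $q$ all pass through $q$, so instead one counts, for each $q\in\P$, the number of pairs $(r,r')$ with $d(q,r)=d(q,r')$, which is $\sum_{q}\sum_{\delta\in\Delta_q(\P)} |\{r: d(q,r)=\delta\}|^2$. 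By Cauchy--Schwarz this is at least $m^4/\big(\sum_q|\Delta_q(\P)|\big)$, so to get $\max_q|\Delta_q(\P)|\more m^{8/15}$ it suffices to show $\sum_q|\{(r,r'): d(q,r)=d(q,r')\}| \less m^{3}\cdot m^{-8/15}\cdot m = m^{4-8/15}$; equivalently, the number of ``isosceles'' triples (including degenerate ones) is $\less m^{52/15}$.

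The next step is to convert the count of isosceles triples into a point-line incidence count. The standard device (as in \cite{BKT}) is: $d(q,r)=d(q,r')$ says $q$ lies on the perpendicular bisector of $r$ and $r'$, so the number of such triples is $\sum_{\{r,r'\}} |\P\cap B_{r,r'}|$ where $B_{r,r'}$ is the perpendicular bisector, which is (after subtracting diagonal terms) at most an incidence count $\I(\P,\L')$ for a set $\L'$ of $O(m^2)$ lines — but one must argue these lines are mostly distinct and, crucially, that not too many of them coincide or pass through a single point, so that we are in the Szemer\'edi--Trotter-type regime where Theorem \ref{thm:genszemtrot} applies with $m$ points and $n\approx m^2$ lines. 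Since $n=m^2$ is outside the range $m^{7/8}<n<m^{8/7}$, we must instead partition the lines by multiplicity or work dyadically, or more simply bound the contribution of high-multiplicity perpendicular bisectors separately (many coincident bisectors correspond to many point pairs with the same bisector, a structured situation). After pruning, we apply the bound on each dyadic piece: with $m$ points and $n$ genuinely distinct lines in the valid range we get $\I\less m^{11/15}n^{11/15}$, and a short optimisation over the number of pieces yields the target $\less m^{52/15}$ up to the combinatorial bound $m+n$ terms, which contribute $\less m^2$, comfortably below $m^{52/15}$.

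The main obstacle will be the case where $-1$ is a square in $\FF$, i.e.\ the presence of isotropic lines: if $\P$ has many points on an isotropic line $\lambda$, then all those pairs have $d=0$, the ``perpendicular bisector'' degenerates, and the incidence reduction breaks down. The fix, following the remark before the corollary, is to handle isotropic lines explicitly. If $\P$ has more than, say, $m^{1/2}$ points on some isotropic line, one passes to that structure and argues directly (using that through each point there are only two isotropic lines, and that $\Delta(\P)\neq\{0\}$ forces $\P$ not to be contained in one of them); otherwise each isotropic line meets $\P$ in few points, the degenerate pairs are negligible, and the argument above goes through. Assembling the pieces — pruning high-multiplicity bisectors, isolating the isotropic contribution, and the dyadic application of Theorem \ref{thm:genszemtrot} within its range $m\less p^{15/11}$ (which matches the hypothesis) — gives the pinned bound, and $|\Delta(\P)|\geq |\Delta_q(\P)|$ for the extremal $q$ gives the global statement.
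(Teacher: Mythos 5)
Your overall strategy --- bound the isosceles triples $T:=|\{(q,r,r')\in\P^3: d(q,r)=d(q,r')\}|$ from below by Cauchy--Schwarz and from above by a point--line incidence count over perpendicular bisectors, with separate care for isotropic lines --- is the right one and is the one used in the paper. However, the reduction contains an arithmetic error that makes your stated target vacuous. From $T\ge \sum_q m^2/|\Delta_q(\P)| \ge m^3/\max_q|\Delta_q(\P)|$, the conclusion $\max_q|\Delta_q(\P)|\more m^{8/15}$ requires $T\less m^{3-8/15}=m^{37/15}$, not $m^{4-8/15}=m^{52/15}$: you divide by $m$ twice (once inside $m^4/\sum_q|\Delta_q(\P)|$ and once more to pass to the maximum). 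Since trivially $T\le m^3\le m^{52/15}$, the sufficient condition you set out to prove yields nothing.

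The second, more structural, gap is that the dyadic treatment of the $O(m^2)$ bisectors is exactly where the work lies, and it is not carried out. To make it close at the (corrected) target $m^{37/15}$ you would need: (i) that at most $O(m)$ pairs $\{r,r'\}$ with $d(r,r')\ne 0$ share a given bisector $\ell$ (because $r'$ is the reflection of $r$ in $\ell$); without this the high-weight dyadic pieces overshoot. (ii) To discard the pairs lying on a common isotropic line: all $\binom{k}{2}$ pairs on an isotropic line with $k$ points have that \emph{same} line as their bisector, so their weight is not $O(m)$ --- but the corresponding triples are exactly the degenerate $d=0$ ones. Your fallback threshold $m^{1/2}$ is too low: a point off a line carrying $k$ points of $\P$ only sees $\more k$ distances, and $m^{1/2}<m^{8/15}$, so "arguing directly" fails in the range $m^{1/2}<k<m^{8/15}$. (iii) Dyadic pieces with between $m$ and $m^{8/7}$ lines require the hypothesis $m^{-2}n^{13}\less p^{15}$ of Theorem \ref{thm:genszemtrot} with $n>m$, which does \emph{not} follow from $m\less p^{15/11}$. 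All three issues disappear if one localises the construction at each $r$, as in the paper: for fixed $r$ the set $\L_r=\{\ell_{rs}: s\in\P,\ d(r,s)\ne 0\}$ consists of at most $m$ \emph{distinct} lines, Theorem \ref{thm:genszemtrot} (or Lemma \ref{lem:combinatorial}, if $|\L_r|<m^{7/8}$) gives $\I(\P,\L_r)\less m^{22/15}$ under precisely $m\less p^{15/11}$, and summing over $r$ gives $T\less m^{37/15}$ with no multiplicity bookkeeping; the zero distances are excluded on both sides of the Cauchy--Schwarz by using that at most $m/3$ points lie on any isotropic line.
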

\begin{proof}
For two distinct points $r,s\in \FF^2$, the set
\[\ell_{rs} := \{q\in \FF^2:  d(q,r) = d(q,s) \} \]
is a line (over $\R$, this is the \emph{perpendicular bisector} of $r$ and $s$).
If $-1$ is not a square in $\FF$, then for a fixed point $r$, distinct $s$ give distinct lines.
However, when $-1$ is a square in $\FF$, then for any two points $s,t$ on one of the isotropic lines $\lambda_r,\mu_r$ we have $\ell_{rs} = \ell_{rt}$, which would cause a problem in the counting argument below.

To deal with these isotropic lines, first observe that if any line contains at least $m/3$ 
points of $\P$, but $\P$ has a point $q$ outside that line, then $\Delta_q(\P)\more m$.
By the assumption that $\Delta(\P)\neq \{0\}$, we know that $\P$ is not contained in an isotropic line, so we can assume that any isotropic line contains at most $m/3$ points of $\P$.
For any fixed $r\in \P$, this implies that there are at least $m/3$ points not on $\lambda_r$ or $\mu_r$.
We will use this observation below.

For a fixed $r\in \P$, 
define the set of lines
\[ \L_r := \{ \ell_{rs}: s\in \P, d(r,s)\neq 0\}.   \]
We have $|\L_r|\leq m$,
so Theorem \ref{thm:genszemtrot} gives
\begin{equation*}\label{eq:distancesincs}
\I(\P,\L_r) \less m^{22/15}. 
\end{equation*}

Distinct $s$ with $d(r,s)\neq 0$ give distinct lines in $\L_r$.
To see this, assume (by translating the point set) that $r=(0,0)$,
so that the line $\ell_{rs}$ is  given by  $2s_xx +2s_yy = s_x^2+s_y^2$.
This implies that distinct $s$ give distinct lines, unless $s_x^2+s_y^2=0$,
which is excluded in the definition of $\L_r$.
It follows that an incidence $q\in \ell_{rs}$ corresponds to exactly two triples $(q,r,s),(q,s,r)\in \P^3$ such that $d(q,r)= d(q,s)\neq 0$ (over $\R$, these triples form \emph{isosceles triangles}, at least when the three points are not collinear).
Thus we have
\begin{align*}
\sum_{r\in \P}\I(\P,\L_r) &\approx |\{(q,r,s)\in \P^3 : d(q,r) = d(q,s)\neq 0\}|\\
&= \sum_{q\in \P} |\{(r,s)\in \P^2: d(q,r) = d(q,s)\neq 0\}|.
\end{align*}

For every $q\in \P$, Lemma \ref{lem:cauchyschwarz} (with $X=\P\backslash(\lambda_q\cup\mu_q)$, $Y = \Delta_q(\P)\backslash\{0\}$, $Z = \FF\backslash\{0\}$ and $\varphi(r) = d(q,r)$)
gives:
\begin{align*}
|\{(r,s)\in \P^2: d(q,r) &= d(q,s)\neq 0\}| \geq  \frac{|\{(r,\delta)\in \P\times \Delta_q(\P): d(q,r) = \delta\neq 0\}|^2}{|\Delta_q(\P)|}.
\end{align*}
As we assumed that at least $m/3$ points of $\P$ are not on an isotropic line through any given point in $\P$, we have, for every $q\in \P$,
\[|\{(r,\delta)\in \P\times \Delta_q(\P): d(q,r) = \delta\neq 0\}| \geq m/3.\]
Hence we may conclude that
\[m^{37/15}\more \sum_{r\in \P}\I(\P,\L_r)
\more  \sum _{q\in \P} \frac{m^2}{ |\Delta_q(\P)|}
 \geq  \frac{m^3}{\max_q|\Delta_q(\P)|}.\]
This gives $\max_q |\Delta_q(\P)| \more m^{8/15}$.
\end{proof}

For $|A|\less p^{2/3}$, using Theorem \ref{thm:cartszemtrot} in the proof above gives $|\Delta_q(A\times A)|\more  |A|^{5/4}$, improving on \cite[Corollary 13$(a)$]{AMRS}.
Petridis \cite{Petridis} improved this to $|\Delta_q(A\times A)|\more |A|^{3/2}$, 
but his proof does not seem to apply to unstructured point sets as in Corollary \ref{cor:distances}.

\subsection{Beck's theorem}

As a final application,
we show that Theorem~\ref{thm:genszemtrot} leads to a sharper bound over general fields for a theorem of Beck \cite{Beck}, also known as `Beck's theorem of two extremes'.
We say that a line $\ell$ is \emph{determined} by a point set $\P$ if $\ell$ contains at least two points of $\P$. 
Beck proved that for any set $\P$ of $m$ points in $\R^2$, either $(i)$ $\P$ has $\Omega(m)$ points on a line, or $(ii)$ $\P$ determines $\Omega(m^2)$ distinct lines.

Naturally there are similar results in other settings. 
In $\mathbb{F}_p^2$, Helfgott and Rudnev \cite{HR} established that if $m<p$ and $\P=A\times A$ (so no line has $\Omega(m)$ points),
then $\P$ determines $\Omega(m^{1+1/267})$ lines.
Jones \cite{Jones12} removed the Cartesian product condition, 
proving that either $\P$ has $\Omega(m)$ points on a line,
or $\P$ determines $\Omega(m^{1+1/109})$ lines.
As in the remark directly after Theorem \ref{thm:tj},
Jones's argument can be improved using \cite[Theorem 6]{RRS};
the resulting exponent would be $1 +1/53$.
For large point sets,  
Alon \cite{Alon} proved that any point set $\P\subset \FF_q^2$ of size $m> q$ determines $cm^2$ lines, with $c$ depending on $m/q$.

Aksoy Yazici et al. \cite{AMRS} used Theorem~\ref{thm:rudnev} to show that $\P=A\times A\subseteq \FF^2$ determines $\Omega(m^{3/2})$ lines over an arbitrary field $\FF$ (assuming $m\less p^{4/3}$ in positive characteristic).
We present a more general, albeit weaker, result for an unstructured point set, improving on \cite{Jones12}. 
We deduce it from the incidence bound in Theorem~\ref{thm:genszemtrot}
using a standard argument.
Using Theorem \ref{thm:cartszemtrot} instead of Theorem \ref{thm:genszemtrot} in our proof of Corollary \ref{cor:beck} would give the same result as in \cite{AMRS}.
Our proof works for $m\less p^{7/6}$, but we note that for $m>p$ it is weaker than the result of Alon \cite{Alon}.

\begin{corollary}\label{cor:beck}
Let $\P$ be a set of $m$ points in $\FF^2$. 
If $\FF$ has positive characteristic $p$, suppose that $m\less p^{7/6}$. 
Then one of the following is true:
\begin{enumerate}[label=(\roman*)]
\item[$(i)$] $\P$ has $\Omega(m)$ points on a line;
\item[$(ii)$] $\P$ determines $\Omega(m^{8/7})$ lines.
\end{enumerate}
\end{corollary}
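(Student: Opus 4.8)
The plan is to run the standard dyadic (Beck-type) argument, using Theorem~\ref{thm:genszemtrot} in place of the Szemer\'edi--Trotter bound. Suppose $\P$ does not have $\Omega(m)$ points on a line; more precisely, fix a small constant $c_0>0$ and assume no line contains more than $c_0 m$ points of $\P$. We want to show that the number of lines determined by $\P$ is $\Omega(m^{8/7})$. Since every pair of points of $\P$ lies on exactly one determined line, we have $\binom{m}{2}=\sum_{\ell}\binom{|\ell\cap\P|}{2}$, where the sum is over determined lines. So it suffices to show that the pairs lying on lines with few points (say fewer than some threshold $k_0\approx m^{1/7}$ points) account for at most half of all pairs; then the remaining $\Omega(m^2)$ pairs lie on lines each carrying $O(m)$ points, forcing $\Omega(m)$ such lines --- but that only gives $\Omega(m)$, so in fact we should be more careful and split dyadically.

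The cleaner route: for each integer $j\ge 1$, let $\L_j$ be the set of determined lines $\ell$ with $2^j\le|\ell\cap\P|<2^{j+1}$, and let $L_j=|\L_j|$. Each such line is incident to between $2^j$ and $2^{j+1}$ points, so $\I(\P,\L_j)\approx 2^j L_j$, and the number of pairs of $\P$ on lines of $\L_j$ is $\approx 2^{2j}L_j$. Now apply Theorem~\ref{thm:genszemtrot} to $\P$ and $\L_j$ (checking the hypotheses, see below): in the range where the bound applies we get $2^j L_j\less m^{11/15}L_j^{11/15}$, hence $L_j\less m^{11/4}2^{-15j/4}$, and therefore the pair count on $\L_j$ satisfies $2^{2j}L_j\less m^{11/4}2^{-7j/4}$. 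Summing this over all $j$ with $2^j\ge k_0$ (a geometric series dominated by its first term) gives a total of $\less m^{11/4}k_0^{-7/4}$ pairs on lines with at least $k_0$ points. Choosing $k_0\approx m^{1/7}$ makes this $\less m^{11/4}\cdot m^{-1/4}=m^{5/2}$, which for large $m$ is $o(m^2)$, hence at most $\tfrac14\binom{m}{2}$. Meanwhile lines with fewer than $k_0$ points contribute at most $k_0\cdot(\text{number of such lines})$ pairs, so the number of lines with fewer than $k_0$ points is $\more m^2/k_0\more m^{2-1/7}=m^{13/7}$... wait, that already exceeds $m^{8/7}$, so in fact we obtain the stronger conclusion and certainly conclusion~$(ii)$. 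More simply: once we know lines with $\ge k_0$ points carry $o(m^2)$ pairs, the lines with $<k_0$ points carry $\Omega(m^2)$ pairs, and since each carries fewer than $k_0\approx m^{1/7}$ pairs there must be $\Omega(m^2/m^{1/7})=\Omega(m^{13/7})\ge\Omega(m^{8/7})$ of them.

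I would need to address the boundary ranges of Theorem~\ref{thm:genszemtrot}, which only applies when $m^{7/8}<L_j<m^{8/7}$ (with $m$ and $n=L_j$ swapped in its statement). For $L_j$ outside this window I instead use Lemma~\ref{lem:combinatorial}: when $L_j\le m^{7/8}$ the bound $\I(\P,\L_j)\less m^{1/2}L_j+m$ gives $2^jL_j\less m^{1/2}L_j$, so $2^j\less m^{1/2}$, i.e. these are lines with $O(m^{1/2})$ points and there can be at most $\binom{m}{2}/\binom{2^j}{2}$, a contribution that is easily absorbed; when $L_j\ge m^{8/7}$ there is nothing to prove since already $(ii)$ holds. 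I also need the positive-characteristic hypothesis of Theorem~\ref{thm:genszemtrot}, namely $L_j^{-2}m^{13}\less p^{15}$ in our notation: since $L_j\ge m^{7/8}$ in the relevant range we get $L_j^{-2}m^{13}\le m^{-7/4}m^{13}=m^{45/4}$... hmm, that is too big, so instead I should use that the only $j$ we truly care about are those with $2^j$ bounded by $c_0 m$, equivalently $L_j\ge \binom{m}{2}^{1/2}/(c_0m)\cdot(\dots)$ --- more directly, we only apply Theorem~\ref{thm:genszemtrot} for $L_j$ with $2^jL_j\le\I(\P,\L)\le m^2$, and one checks $m^{13}L_j^{-2}\less p^{15}$ follows from $m\less p^{7/6}$ after using $L_j\more m^{1-o(1)}$ on the relevant scales; I expect this bookkeeping --- ensuring the hypothesis $m\less p^{7/6}$ suffices uniformly across all dyadic scales that matter --- to be the one genuinely fiddly point, but it is routine. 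The main conceptual step is just the dyadic summation above, which is entirely standard once Theorem~\ref{thm:genszemtrot} is in hand.
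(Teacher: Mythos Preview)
Your overall approach---dyadic decomposition of the determined lines by richness, application of Theorem~\ref{thm:genszemtrot} in the relevant range, and a pair-counting argument---is exactly the paper's. However, there is a genuine arithmetic slip that breaks the argument as written.

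You sum the pair count $m^{11/4}(2^j)^{-7/4}$ over $2^j\ge k_0$ with $k_0\approx m^{1/7}$ and obtain $\ll m^{11/4}\cdot m^{-1/4}=m^{5/2}$, then assert this is $o(m^2)$. But $m^{5/2}>m^2$, so the bound is vacuous: you have not shown that rich lines carry only a small fraction of the pairs. The correct threshold (and the one the paper uses) is $k_0\approx m^{3/7}$. Then $m^{11/4}k_0^{-7/4}\approx m^{11/4-3/4}=m^2$, and with a large enough constant in $k_0$ this drops below $\tfrac14\binom{m}{2}$. Lines with fewer than $k_0$ points then carry $\Omega(m^2)$ pairs; since each such line carries at most $\binom{k_0}{2}\approx m^{6/7}$ pairs (not $k_0$ pairs, as you wrote), there must be $\Omega(m^2/m^{6/7})=\Omega(m^{8/7})$ of them. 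This is exactly the exponent in~$(ii)$, and it explains why the corollary gives $m^{8/7}$ and not the $m^{13/7}$ you arrived at.

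Your positive-characteristic check is also inverted. In Theorem~\ref{thm:genszemtrot} the hypothesis is $m^{-2}n^{13}\ll p^{15}$ with $m$ the number of points and $n$ the number of lines; here $n=L_j$, so the condition reads $m^{-2}L_j^{13}\ll p^{15}$, not $L_j^{-2}m^{13}\ll p^{15}$. Since we may assume $L_j<m^{8/7}$ (otherwise $(ii)$ holds outright), we get $m^{-2}L_j^{13}<m^{-2+104/7}=m^{90/7}$, and $m^{90/7}\ll p^{15}$ is precisely $m\ll p^{7/6}$. So once the roles are set straight the hypothesis is immediate, with none of the ``fiddly bookkeeping'' you anticipated.
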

\begin{proof}
Let $\L$ be the set of lines determined by $\P$.
Partition $\L$ into $\lfloor\log_2 m\rfloor$ sets $\L_j\subseteq \L$ such that 
$$\L_j =\{ \ell\in \L:2^j\leq |\ell\cap \P|<2^{j+1}\}.$$

Consider $\L_j$ with $|\L_j|>m^{7/8}$.
We can assume that each $|\L_j|< m^{8/7}$, since otherwise $(ii)$ holds.
Thus, in positive characteristic, we can use the assumption $m\less p^{7/6}$ to get $m^{-2}|\L_j|^{13}\less p^{15}$.
This lets us apply Theorem \ref{thm:genszemtrot} to $\P$ and $\L_j$.
Since every line in $\L_j$ gives at least $2^j$ incidences, Theorem \ref{thm:genszemtrot} gives
\[2^j|\L_j| \less m^{11/15} |\L_j|^{11/15}, \]
which leads to 
\[ |\L_j|\less   \frac{m^{11/4}}{(2^j)^{15/4}}. \]
Each line in $\L_j$ contains $\Theta(2^{2j})$ pairs of points, 
so all the lines in $\L_j$ together contain $O( m^{11/4} (2^j)^{-7/4})$ pairs of points.

For $\L_j$ with $m^{1/2}\leq|\L_j|\leq m^{7/8}$,
Lemma \ref{lem:combinatorial} gives 
$2^j|\L_j|\less m^{1/2}|\L_j|$,
so $2^j\less m^{1/2}$,
and then the number of pairs of points on lines of $\L_j$ is $O(m^{15/8})$.
Finally,
if $|\L_j|\leq m^{1/2}$, then Lemma \ref{lem:combinatorial} gives 
$2^j|\L_j|\less m$, so $|\L_j|\less 2^{-j}m$,
and the number of pairs of points on lines of $\L_j$ is $O(2^jm)$.

Let $C$ be a large constant and let $U$ be the union of all $\L_j$ for $Cm^{3/7}\leq 2^j\leq m/C$. 
By the estimates above, there are then at most 
$$O\left(\frac{m^{11/4}}{ (Cm^{3/7})^{7/4}}+m^{15/8}\log m+ \frac{m}{C}\cdot m\right)=O\left(\frac{m^2}{C}\right)$$
pairs of points on lines in $U$. 

For sufficiently large $C$, this quantity is less than $\frac{1}{2}\binom{m}{2}$.
Thus, the remaining $\Omega(m^2)$ pairs of distinct  points of $\P$ must lie outside $U$.
Either a positive proportion of these pairs are supported on lines containing more than $m/C$ points, or a positive proportion of pairs lie on lines with less than $Cm^{3/7}$ (and at least two) points. 
So either there is a line containing at least $m/C$ points,
and $(i)$ holds, 
or there are 
\[\Omega\left( \frac{m^2}{(Cm^{3/7})^2} \right) = \Omega\left( m^{8/7}\right)\]
distinct lines defined by pairs of points of $\P$,
and $(ii)$ holds.
This completes the proof of the corollary.
\end{proof}

\section*{Acknowledgements} 
The first author is grateful to Misha Rudnev for his guidance, and for numerous invaluable discussions and suggestions.
The second author also thanks Misha Rudnev for some helpful conversations.
Both authors would like to thank Mark Lewko, Brendan Murphy, Thang Pham, Ilya Shkredov and an anonymous referee for valuable comments.
The second author was partially supported by Swiss National Science Foundation grants 200020--165977 and 200021--162884.

\end{document}